\renewcommand{\le}{\leqslant}
\renewcommand{\ge}{\geqslant}
\renewcommand{\setminus}{\smallsetminus}
\renewcommand{\gamma}{\upgamma}
\newcommand{\dd}{\mathrm{d}}
\newcommand{\Rad}{\mathrm{\bf Rad}}
\newcommand{\n}{[n]}
\newcommand{\T}{\mathsf{T}}
\renewcommand{\d}{\delta}
\newcommand{\e}{\varepsilon}
\newcommand{\R}{\mathbb R}
\newcommand{\1}{\mathbf 1}
\newtheorem{theorem}{Theorem}
\newtheorem{lemma}[theorem]{Lemma}
\newtheorem{proposition}[theorem]{Proposition}
\newtheorem{corollary}[theorem]{Corollary}
\theoremstyle{remark}
\newtheorem{remark}[theorem]{Remark}
\renewcommand{\S}{\mathsf{S}}
\renewcommand{\subset}{\subseteq}
\newcommand{\E}{\mathbb{ E}}
\newcommand{\J}{\mathsf{J}}
\renewcommand{\hat}{\widehat}
\newcommand{\N}{\mathbb N}
\newcommand{\Z}{\mathbb Z}
\newcommand{\eqdef}{\stackrel{\mathrm{def}}{=}}
\newcommand{\A}{\mathsf{A}}
\newcommand{\EE}{\mathsf{E}}
\renewcommand{\emptyset}{\varnothing}
\begin{document}

\title[Sharp metric $X_p$ inequalities]{Discrete Riesz transforms and sharp metric $X_p$ inequalities}

\author{Assaf Naor}
\address{Mathematics Department\\ Princeton University\\ Fine Hall, Washington Road, Princeton, NJ 08544-1000, USA}
\email{naor@math.princeton.edu}
\thanks{Supported in part by the BSF, the Packard Foundation and the Simons Foundation.}

\date{}

\begin{abstract} For $p\in [2,\infty)$ the metric $X_p$ inequality with sharp scaling parameter is proven here to hold true in $L_p$.  The geometric consequences of this result include the following sharp statements about embeddings of $L_q$ into $L_p$ when $2<q<p<\infty$: the maximal $\theta\in (0,1]$ for which $L_q$ admits a bi-$\theta$-H\"older embedding into $L_p$ equals $q/p$, and for $m,n\in \N$ the smallest possible bi-Lipschitz distortion of any embedding into $L_p$ of the grid $\{1,\ldots,m\}^n\subset \ell_q^n$ is bounded above and below by constant multiples (depending only on $p,q$) of the quantity $\min\{n^{(p-q)(q-2)/(q^2(p-2))}, m^{(q-2)/q}\}$.
\end{abstract}

\maketitle

\section{Introduction}

The purpose of the present article is to resolve positively three conjectures that were posed by the author in collaboration with G. Schechtman in~\cite{NS14}. Specifically, we shall prove here that Conjecture~1.5, Conjecture~1.8 and Conjecture~1.12 of~\cite{NS14} all have a positive answer. As we shall explain below, of these three conjectures, Conjecture~1.8 was a  longstanding folklore open problem in embedding theory, while Conjecture~1.12 asserts the validity of a quite subtle and perhaps unexpected phase transition phenomenon that was first formulated as conceivably holding true in~\cite{NS14}.  Conjecture~1.5 relates to a bi-Lipschitz invariant that was introduced in~\cite{NS14}, asking about finer properties of this invariant in terms of a certain auxiliary parameter.

It was proven in~\cite{NS14} that Conjecture~1.8 and Conjecture~1.12 follow from Conjecture~1.5. Thus Conjecture~1.5  is the heart of the matter and the main focus of the present article, but we shall first describe all of the above conjectures since, by proving their validity, we establish delicate geometric phenomena related to the metric structure of $L_p$ spaces. In addition to these applications, a key contribution of the present article is the use of a deep result of Lust-Piquard~\cite{Lus98} for geometric purposes. While~\cite{NS14} proposed an approach to resolve the above conjectures, formulated as Question~6.1 in~\cite{NS14} and discussed at length in~\cite[Section~6]{NS14}, where it was shown to imply the above conjectures, we do not pursue this approach here, and indeed Question~6.1 of~\cite{NS14} remains open. Below we take a different route, yielding a novel connection between purely geometric questions and investigations in modern  harmonic analysis and operator algebras.

\subsection{Geometric statements} Following standard notation in Banach space theory and embedding theory (as in, say, \cite{LT77,Ost13}), for $n\in \N$ and $p\in [1,\infty)$ we let $\ell_p^n$ denote the space $\R^n$ equipped with the $\ell_p$ norm. When referring to the space $L_p$, we mean for concreteness the Lebesgue space $L_p(\R)$, though all of our new geometric results apply equally well to any infinite dimensional $L_p(\mu)$ space. The $L_p$ distortion of a metric space $(X,d_X)$, denoted $c_p(X)\in [0,\infty]$, is the infimum over those $D\in [0,\infty]$ for which there exists a mapping $f:X\to L_p$ that satisfies
\begin{equation*}\label{eq:distortion def}
\forall\, x,y\in X,\qquad d_X(x,y)\le \|f(x)-f(y)\|_{L_p}\le Dd_X(x,y).
\end{equation*}
$(X,d_X)$ is said to admit a bi-Lipschitz embedding into $L_p$ if $c_p(X)<\infty$.

Given $m,n\in \N$ and $q\in [1,\infty)$, the metric space whose underlying set is $\{1,\ldots,m\}^n$ (the $m$-grid in $\R^n$), equipped with the metric inherited from $\ell_q^n$, will be denoted below by $[m]_q^n$. It follows from the classical work~\cite{Pal36} of Paley, in combination with general principles related to differentiation of Lipschitz functions (see~\cite[Chapter~7]{BL00}), that if $2<q<p<\infty$ then $\lim_{n\to \infty} c_p(\ell_q^n)=\infty$. Since $[m]_q^n$ becomes ``closer" to $\ell_q^n$ as $m\to \infty$, one can apply an ultrapower argument (see~\cite{Hei80}) to deduce from this that $\lim_{m,n\to \infty} c_p([m]_q^n)=\infty$, but such reasoning does not yield information on the rate  of growth of $c_p([m]_q^n)$.  Effective estimates here follow from an alternative approach of Bourgain~\cite{Bou87} (with an improvement in~\cite{GNS12}), as well as the approach of~\cite{NS14}, but the resulting bounds are far from being sharp. Resolving Conjecture~1.12 of~\cite{NS14},  Theorem~\ref{thm:grid distortion} below computes the quantity $c_p([m]_q^n)$ up to constant factors that may depend on $p,q$ but not on $m,n$.

\begin{theorem}[Sharp evaluation of the $L_p$ distortion of $\ell_q^n$ grids]\label{thm:grid distortion} Suppose that $p,q\in [2,\infty)$ satisfy $q<p$. Then for every $m,n\in \N$ we have
\begin{equation}\label{eq:grid distortion formula}
c_p\big([m]_q^n\big)\asymp_{p,q} \min\left\{n^{\frac{(p-q)(q-2)}{q^2(p-2)}},m^{1-\frac{2}{q}}\right\}.
\end{equation}
\end{theorem}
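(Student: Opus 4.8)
Throughout write $\alpha=\tfrac{(p-q)(q-2)}{q^2(p-2)}$ and $\beta=1-\tfrac2q$ for the two exponents in~\eqref{eq:grid distortion formula}. The plan is to prove the upper and lower bounds of~\eqref{eq:grid distortion formula} by quite different means: the upper bound by two explicit embeddings, and the lower bound --- the heart of the matter --- by invoking the metric $X_p$ inequality with sharp scaling parameter, whose validity in $L_p$ (i.e., Conjecture~1.5 of~\cite{NS14}) I would establish as the technical core of the paper. For the upper bound --- the ``easy direction'' of the conjecture, which is already in~\cite{NS14} --- it suffices to exhibit, for all $m,n$, embeddings of $[m]_q^n$ into $L_p$ of distortion $\lesssim_{p,q}n^\alpha$ and $\lesssim_{p,q}m^\beta$; for completeness, here is the idea. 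The bound $m^\beta$ is elementary: since $q\ge 2$ we have $\tfrac2q\in(0,1]$, so by Schoenberg's theorem on negative definite kernels the $\tfrac2q$-snowflake $\big(\{1,\ldots,m\}^n,\,\|x-y\|_2^{2/q}\big)$ embeds isometrically into $L_2\subset L_p$; and since every nonzero coordinate of a difference $x-y$ of points of $\{1,\ldots,m\}^n$ lies in $[1,m-1]$, one has the pointwise comparison $\|x-y\|_2^{2/q}\le\|x-y\|_q\le(m-1)^\beta\|x-y\|_2^{2/q}$, so this isometry, read as a map out of $[m]_q^n$, is bi-Lipschitz with distortion at most $(m-1)^\beta$. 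The bound $n^\alpha$ is $c_p([m]_q^n)\le c_p(\ell_q^n)\lesssim_{p,q}n^\alpha$, the last inequality being the known sharp linear estimate for the distortion of $\ell_q^n$ into $L_p$; here $\alpha=\theta\big(\tfrac1q-\tfrac1p\big)$ with $\theta=\big(\tfrac12-\tfrac1q\big)/\big(\tfrac12-\tfrac1p\big)\in(0,1)$, so $\alpha<\min\{\tfrac12-\tfrac1q,\,\tfrac1q-\tfrac1p\}$ and $n^\alpha$ beats both of the obvious embeddings $\ell_q^n\hookrightarrow\ell_2^n\subset L_p$ and $\ell_q^n\hookrightarrow\ell_p^n\subset L_p$.

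For the lower bound I would follow the reduction of~\cite{NS14}: granted the metric $X_p$ inequality in $L_p$ in its sharp form (with scaling parameter $k$), every $D$-embedding $f\colon[m]_q^n\to L_p$ with $k\le m$ is forced to have $D$ large. Concretely one transfers $f$ to the setting of the inequality --- functions on a discrete torus $\Z_{2k}^n$, with the coordinates of $[m]_q^n$ suitably dilated and the cyclic structure handled as in~\cite{NS14} --- and estimates the ``large'' $\ell_q$-increments that govern one side of the inequality against the ``local'' increments and the other auxiliary increments on the right-hand side; the balance between these terms is controlled by the power of $k$ in the inequality, which is exactly where sharpness matters. Optimizing over $1\le k\le m$ then yields $D\gtrsim_{p,q}\min\{n^\alpha,m^\beta\}$: the unconstrained optimum is attained at $k^{*}\asymp_{p,q}n^{(p-q)/(q(p-2))}$ and produces the term $n^\alpha$ (note $(k^{*})^\beta=n^\alpha$), whereas when $m<k^{*}$ one is forced to take $k=m$, producing the term $m^\beta$. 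With the non-sharp $k$-dependence of~\cite{NS14}, or with the approach of Question~6.1 of~\cite{NS14}, one loses a power of $n$ here, which is precisely why those bounds fall short of~\eqref{eq:grid distortion formula}. Together with the upper bound, this proves the theorem.

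The genuine obstacle is therefore the metric $X_p$ inequality in $L_p$ with the sharp scaling parameter, and the route I would take is the one signalled in the abstract. After a suitable smoothing on $\Z_{2k}^n$ --- or on the Walsh system $\{-1,1\}^n$ --- by the heat or Poisson semigroup, the ``large-jump'' operator on the left-hand side of the inequality should be expressible through discrete Riesz transforms, for which one has $L_p$ bounds with constants independent of the ambient dimension $n$; this dimension-free estimate is the deep theorem of Lust-Piquard~\cite{Lus98} on the Riesz transforms associated to the number operator. Dimension-freeness is exactly the property that lets the power of $k$ be driven down to its conjectured value --- the cruder arguments lose powers of $n$ because the relevant operator norms are handled dimension-dependently --- after which one converts the operator inequality into the metric statement by the discretization and ``random rescaling plus averaging'' machinery of~\cite{NS14}. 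I expect the delicate points to be (i) pinning down the precise smoothing that simultaneously exposes a Riesz-transform structure and supplies a gain of the correct power of $k$, and (ii) performing the passage from the operator inequality to the metric one without reintroducing any dependence on $n$.
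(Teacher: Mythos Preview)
Your plan is correct and takes the paper's route: the upper bound via the two embeddings you describe, and the lower bound via the sharp metric $X_p$ inequality (Theorem~\ref{thm:main}) proved through Lust-Piquard's dimension-free Riesz transform bounds and then optimized exactly as in Section~\ref{sec:how to apply}. The one refinement worth flagging is your step~(i): the mechanism is not heat or Poisson smoothing but rather the algebraic identity $\EE_{\n\setminus\S}=\EE_{\n\setminus\S}\Delta_{\S}^{\!1/2}\Delta_{\n}^{\!-1/2}$ plus Jensen (Lemma~\ref{lem:use jensen}), after which Lust-Piquard~\eqref{eq:LP-Riesz-S} is applied twice --- once for $\S$, once for $\n$ --- sandwiching a pointwise use of the \emph{linear} $X_p$ inequality~\eqref{eq:linearXp} to obtain the hypercube chaos inequality (Theorem~\ref{thm:cube h}), with the transfer to the torus (your concern~(ii)) a separate elementary step (Section~\ref{sec:implication}).
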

In the statement of Theorem~\ref{thm:grid distortion}, as well as in what follows, we use standard asymptotic notation. Namely, the notation $a\lesssim b$ (respectively $a\gtrsim b$) stands for $a\le c b$ (respectively $a\ge c b$) for some universal constant $c\in (0,\infty)$. The notation $a\asymp b$ stands for $(a\lesssim b)\wedge (b\lesssim a)$. When we allow for implicit constants to depend on parameters, we indicate this by subscripts. Thus $a\lesssim_{p,q} b$ (respectively $a\gtrsim_{p,q} b$) means that there exists $c(p,q)\in (0,\infty)$ that may depend only on $p,q$ such that $a\le c(p,q)b $ (respectively $a\ge c(p,q) b$). The notation $a\asymp_{p,q} b$ stands for  $(a\lesssim_{p,q} b)\wedge (b\lesssim_{p,q} a)$.

Very few results at the level of precision of Theorem~\ref{thm:grid distortion} are known, and analogous questions are open even for some values of $p,q$ that are not covered by Theorem~\ref{thm:grid distortion}; see~\cite[Remark~1.13]{NS14} for more on this interesting topic. The asymptotic formula~\eqref{eq:grid distortion formula} expresses the statement that there exist two specific embeddings of $[m]_q^n$ into $L_p$ such that one of them is always the best possible embedding of $[m]_q^n$ into $L_p$, up to constant factors that do not depend on $m,n$. One of these embeddings arises from the work of Rosenthal~\cite{Ros70} (relying also on computations in~\cite{GPP80,FJS88}), and the other is due to Mendel and the author~\cite{MN06} (relying also on a construction from~\cite{Sch38}). These issues, including precise descriptions of the above two embeddings, are explained in detail in~\cite{NS14}.

The following immediate corollary of Theorem~\ref{thm:grid distortion} asserts that if $2<q<p<\infty$ and $m,n\in \N$ then the $L_p$ distortion of $[m]_q^n$ exhibits a phase transition at $m\asymp n^{(p-q)/(q(p-2))}$.

\begin{corollary}[Sharp phase transition of the $L_p$ distortion of $\ell_q^n$ grids] Suppose that $m,n\in \N$ and $p,q\in (2,\infty)$ satisfy $q<p$. Then
$$
m\gtrsim n^{\frac{p-q}{q(p-2)}}\implies c_p\big([m]_q^n\big)\asymp_{p,q} c_p\big(\ell_q^n\big),
$$
while as $n\to\infty$ we have
$$
m=o\left(n^{\frac{p-q}{q(p-2)}}\right)\implies c_p\big([m]_q^n\big)=o\left(c_p\big(\ell_q^n\big)\right).
$$
\end{corollary}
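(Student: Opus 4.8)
The plan is to deduce this corollary directly from the asymptotic formula~\eqref{eq:grid distortion formula} of Theorem~\ref{thm:grid distortion}, treating everything as an exercise in comparing the two terms inside the minimum. First I would record the special case $m=n$ (or rather $m$ large) of~\eqref{eq:grid distortion formula}: since $[m]_q^n$ contains an isometric copy of $[2]_q^n$ and since for $m\geq n^{(p-q)/(q(p-2))}$ we will check that the first term in~\eqref{eq:grid distortion formula} is the smaller one, one gets $c_p\big(\ell_q^n\big)\asymp_{p,q} n^{(p-q)(q-2)/(q^2(p-2))}$; more directly, $c_p(\ell_q^n)$ is recovered as the $m\to\infty$ limit (via the ultrapower/compactness reasoning already cited in the introduction), so $c_p\big(\ell_q^n\big)\asymp_{p,q} n^{(p-q)(q-2)/(q^2(p-2))}$ unconditionally, and then~\eqref{eq:grid distortion formula} reads $c_p([m]_q^n)\asymp_{p,q}\min\{c_p(\ell_q^n),\,m^{1-2/q}\}$.

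The key computation is the elementary identity that the two exponents balance exactly at the claimed threshold: one has $m^{1-2/q}\geq n^{(p-q)(q-2)/(q^2(p-2))}$ if and only if
\begin{equation*}
m^{\frac{q-2}{q}}\geq n^{\frac{(p-q)(q-2)}{q^2(p-2)}}\iff m^{\frac{q-2}{q}}\geq \left(n^{\frac{p-q}{q(p-2)}}\right)^{\frac{q-2}{q}}\iff m\geq n^{\frac{p-q}{q(p-2)}},
\end{equation*}
where in the last step I used that $q>2$ so the exponent $(q-2)/q$ is strictly positive and the inequality is preserved upon raising to the power $q/(q-2)$. Hence when $m\gtrsim n^{(p-q)/(q(p-2))}$ the minimum in~\eqref{eq:grid distortion formula} is achieved (up to constants) by the first term, giving $c_p([m]_q^n)\asymp_{p,q} n^{(p-q)(q-2)/(q^2(p-2))}\asymp_{p,q} c_p(\ell_q^n)$, which is the first implication. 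For the second implication, if $m=o\big(n^{(p-q)/(q(p-2))}\big)$ as $n\to\infty$, then raising to the positive power $(q-2)/q$ gives $m^{1-2/q}=o\big(n^{(p-q)(q-2)/(q^2(p-2))}\big)=o\big(c_p(\ell_q^n)\big)$, so in~\eqref{eq:grid distortion formula} the second term dominates and $c_p([m]_q^n)\asymp_{p,q} m^{1-2/q}=o\big(c_p(\ell_q^n)\big)$.

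There is essentially no obstacle here: the corollary is a formal consequence of Theorem~\ref{thm:grid distortion}, and the only point requiring a line of care is that the monotonicity argument for raising both sides to the power $q/(q-2)$ needs $q>2$ strictly (which is part of the hypothesis $p,q\in(2,\infty)$), since for $q=2$ the second term $m^{1-2/q}$ degenerates to the constant $1$ and the phase transition disappears. I would also note in passing that the use of $c_p(\ell_q^n)\asymp_{p,q} n^{(p-q)(q-2)/(q^2(p-2))}$ is itself a consequence of Theorem~\ref{thm:grid distortion} (let $m\to\infty$, or equivalently observe that the displayed minimum stabilizes once $m$ exceeds the threshold), so no external input beyond the already-proven theorem is needed, and the "$o(\cdot)$'' statements are simply the negation of the "$\gtrsim$'' regime transported through the increasing map $t\mapsto t^{(q-2)/q}$.
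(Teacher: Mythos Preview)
Your proposal is correct and is exactly the intended argument: the paper itself calls this an ``immediate corollary'' of Theorem~\ref{thm:grid distortion} and gives no further proof, so your spelling out of the comparison between the two terms in the minimum (via the monotone map $t\mapsto t^{(q-2)/q}$) is precisely the verification the paper omits. The only point worth noting is that the identification $c_p(\ell_q^n)\asymp_{p,q} n^{(p-q)(q-2)/(q^2(p-2))}$ is, as you indicate, a byproduct of Theorem~\ref{thm:grid distortion} together with the inclusion $[m]_q^n\subset \ell_q^n$ (or the Rosenthal embedding cited just after the theorem), so no external input is required.
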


Thus, to state one concrete example so as to illustrate the situation whose validity we establish here, when, say, $q=3$ and $p=4$, and one tries to embed the grid $[m]_3^n$ into $L_4$, one sees that there is a phase transition at $m\asymp \!\sqrt[6]{n}$. If $m\gtrsim \!\sqrt[6]{n}$ then any embedding of $[m]_3^n$ into $L_4$ incurs the same  distortion (up to universal constant factors) as the distortion required to embed all of $\ell_3^n$ into $L_4$, which grows like $\!\!\sqrt[18]{n}$. However, if $m=o(\!\sqrt[6]{n})$ then one can embed $[m]_3^n$ into $L_4$ with distortion $o(\!\!\sqrt[18]{n})$, and in this case the $L_4$ distortion of $[m]_3^n$ is $\sqrt[3]{m}$, up to universal constant factors.

\smallskip
Our second geometric result is Theorem~\ref{thm:snowflake sharp} below, which resolves Conjecture~1.8 of~\cite{NS14}.

\begin{theorem}[Evaluation of the critical $L_p$ snowflake exponent of $L_q$]\label{thm:snowflake sharp} Suppose that $p,q\in (2,\infty)$ satisfy $q<p$. Then the maximal $\theta\in (0,1]$ for which the metric space $(L_q,\|x-y\|_{L_q}^\theta)$ admits a bi-Lipschitz embedding into $L_p$ equals $q/p$.
\end{theorem}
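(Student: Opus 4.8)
The plan is to prove the two directions separately, with the substantive content lying entirely in the upper bound on $\theta$. For the easy direction, namely that the snowflake $(L_q,\|x-y\|_{L_q}^{q/p})$ does embed bi-Lipschitzly into $L_p$, I would invoke the classical construction going back to Schoenberg and used in~\cite{MN06}: since $L_q$ embeds isometrically into $L_p$ for $q\le p$ only after snowflaking appropriately, one uses that $(L_q,\|x-y\|_{L_q}^{q/p})$ is isometric to a subset of $L_p$ via the standard Gaussian/stable random variable representation — concretely, realize $\|x-y\|_{L_q}^{q/p}$ as an $L_p$-norm by composing the isometric embedding of $L_q$ into $L_q$-valued functions with a $p/q$-stable-type kernel. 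This is exactly the embedding referenced in the paragraph following Theorem~\ref{thm:grid distortion}, so I would cite~\cite{Sch38,MN06,NS14} and move on; there is no obstacle here.

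The hard direction is to show that no $\theta > q/p$ works, i.e.\ that $(L_q,\|x-y\|_{L_q}^\theta)$ fails to embed into $L_p$ once $\theta>q/p$. The strategy is to reduce this to the sharp metric $X_p$ inequality in $L_p$ (the main analytic theorem of the paper, whose statement is Conjecture~1.5 of~\cite{NS14}) applied to suitable finite configurations inside $L_q$. First I would recall that a bi-Lipschitz embedding of the $\theta$-snowflake of $L_q$ into $L_p$ restricts to a bi-$\theta$-H\"older embedding of the grid $[m]_q^n$, viewed inside $\ell_q^n\subset L_q$, into $L_p$, with distortion controlled by the snowflake distortion. Then I would feed these grids into the metric $X_p$ inequality: the sharp scaling parameter in that inequality forces any map of $[m]_q^n$ into $L_p$ to distort the two natural families of distances (diagonal directions of length $\asymp m$ versus axis directions of length $1$) in a way quantified precisely by the exponents appearing in~\eqref{eq:grid distortion formula}. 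Choosing $m$ and $n$ so that the phase transition is saturated — i.e.\ $m\asymp n^{(p-q)/(q(p-2))}$, where both terms in the minimum in~\eqref{eq:grid distortion formula} are comparable — gives the extremal lower bound $c_p([m]_q^n)\gtrsim_{p,q} m^{1-2/q}$ with $n$ as large as possible relative to $m$.

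The final step is the arithmetic that converts this grid distortion lower bound into the snowflake statement. A $\theta$-H\"older embedding of $[m]_q^n$ with snowflake-distortion $D$ yields a genuine bi-Lipschitz embedding of $[m]_q^n$ into $L_p$ after rescaling, but only on the price of a factor $m^{1-\theta}$ coming from the ratio of the largest to smallest distances in the grid (which is $\asymp m$): concretely one gets $c_p([m]_q^n)\lesssim D\, m^{1-\theta}$. Combining with the lower bound $c_p([m]_q^n)\gtrsim_{p,q} m^{1-2/q}$ forces $m^{1-2/q}\lesssim_{p,q} D\, m^{1-\theta}$, i.e.\ $m^{\theta-2/q}\lesssim_{p,q} D$; but one must check that the constraint $m\asymp n^{(p-q)/(q(p-2))}$ still allows $m\to\infty$, which it does since $n$ is free. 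Letting $m\to\infty$ we conclude that $D=\infty$ whenever $\theta>2/q$ — wait, that is the wrong threshold, so here I must be more careful: the correct extremal choice uses the \emph{first} term of the minimum, giving $c_p([m]_q^n)\gtrsim_{p,q} n^{(p-q)(q-2)/(q^2(p-2))}$ together with $m\asymp n^{(p-q)/(q(p-2))}$, whence $m^{1-\theta}\gtrsim_{p,q} n^{(p-q)(q-2)/(q^2(p-2))}$ forces $(1-\theta)\cdot\frac{p-q}{q(p-2)}\ge \frac{(p-q)(q-2)}{q^2(p-2)}$, i.e.\ $1-\theta\ge \frac{q-2}{q}$, i.e.\ $\theta\le 2/q$ — still not $q/p$. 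The actual reduction must therefore track the snowflake-distortion exponent through the metric $X_p$ inequality directly rather than through the rescaled bi-Lipschitz distortion: one applies the metric $X_p$ inequality to the H\"older image, so that the exponent $\theta$ enters the inequality with the correct power, and the sharp scaling parameter then yields the clean inequality $\theta\le q/p$ in the limit. Pinning down this last reduction — getting the snowflake exponent to interact with the scaling parameter of the metric $X_p$ inequality so that the bound comes out to exactly $q/p$ rather than a weaker threshold — is the main obstacle, and it is precisely the place where the sharpness of the scaling parameter (Conjecture~1.5) is indispensable; the argument is carried out in~\cite{NS14}, where Conjecture~1.8 is deduced from Conjecture~1.5.
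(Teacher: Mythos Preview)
Your proposal eventually lands on the correct approach---apply Theorem~\ref{thm:main} directly to the composition $f=\psi\circ h$ so that the H\"older exponent $\theta$ enters the $X_p$ inequality itself---and this is exactly what the paper does in Section~\ref{sec:how to apply}. But most of your write-up is devoted to the indirect route through $c_p([m]_q^n)$, and that route is not merely inelegant: it genuinely cannot reach the sharp threshold. Two points about your false starts. First, the aspect ratio of $[m]_q^n$ is $\asymp m n^{1/q}$, not $\asymp m$, so the correct comparison is $c_p([m]_q^n)\lesssim D(mn^{1/q})^{1-\theta}$. Second, even with this correction, optimizing at the phase-transition point $m\asymp n^{(p-q)/(q(p-2))}$ yields only the threshold
\[
\theta \le \frac{p(q+2)-4q}{q(2p-q-2)} = \frac{q}{p} + \frac{(q+2)(p-q)^2}{pq(2p-q-2)},
\]
which strictly exceeds $q/p$ whenever $2<q<p$. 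So the indirect route is lossy for a structural reason: passing through the bi-Lipschitz distortion collapses the H\"older information to a single aspect-ratio factor, whereas the $X_p$ inequality compares three distinct distance scales simultaneously.

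Where your proposal is thin is precisely where the paper is concrete. The paper fixes $m=\lceil\sqrt{n/(2k)}\rceil$ (the smallest $m$ permitted by the sharp scaling parameter), applies Theorem~\ref{thm:main} to $f=\psi\circ h$, and obtains the explicit inequality
\[
\Big(\frac{n}{k}\Big)^{\theta/2}k^{\theta/q}\lesssim_p L\sqrt{\frac{n}{k}}\,\Big(k+\Big(\frac{k}{n}\Big)^{p/2}n^{\theta p/q}\Big)^{1/p},
\]
after which the conclusion $\theta\le q/p$ follows by choosing $k$ optimally and letting $n\to\infty$ (the arithmetic is in~\cite{NS14}). Your final paragraph gestures at this but does not state the choice of $m$ or the resulting inequality; including them would turn the sketch into a proof.
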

In the setting of Theorem~\ref{thm:snowflake sharp}, the fact that the metric space  $(L_q,\|x-y\|_{L_q}^{q/p})$ does indeed admit a bi-Lipschitz (even isometric) embedding into $L_p$ was established by Mendel and the author  in~\cite{MN04}. Since then, it has been a well known conjecture that in this context the H\"older exponent $q/p$ cannot be increased, but before~\cite{NS14} it wasn't even known that if $(L_q,\|x-y\|_{L_q}^\theta)$ admits a bi-Lipschitz embedding into $L_p$ then necessarily $\theta<1-\delta$ for some $\d=\d(p,q)>0$. Note that the endpoint case $q=2$ must be removed from Theorem~\ref{thm:snowflake sharp} since $L_2$ embeds isometrically into $L_p$.

\subsection{Optimal scaling in the $L_p$-valued metric $X_p$ inequality} In what follows, given $n\in \N$ we shall denote the set $\{1,\ldots,n\}$ by $\n$. The coordinate basis of $\R^n$ will be denoted by $e_1,\ldots,e_n$, and for a sign vector $\e=(\e_1,\ldots,\e_n)\in \{-1,1\}^n$ and a subset $\S\subset \n$ we shall use the notation
\begin{equation}\label{eq:def eps S}
\e_{\S}\eqdef \sum_{j\in \S}\e_je_j.
\end{equation}

Fix $p\in (0,\infty)$. Following~\cite{NS14}, a metric space $(X,d_X)$ is said to be an $X_p$ metric space if there exists $\mathfrak{X}\in (0,\infty)$ such that for every $n\in \N$ and $k\in \n$ there exists $m\in \N$ such that every function $f:\Z_{2m}^n\to X$ satisfies the following distance inequality.
\begin{multline}\label{eq:metric space is X_p beginning def2}
\bigg(\frac{1}{\binom{n}{k}}\sum_{\substack{S\subset
\n\\|S|= k}}
\E \Big[d_X\big(f(x+m\e_S),f(x)\big)^p\Big]\bigg)^{\frac{1}{p}}\\\le \mathfrak{X}m\Bigg(
\frac{k}{n}\sum_{j=1}^n\E\Big[d_X\big(
f(x+e_j),f(x)\big)^p\Big]+\left(\frac{k}{n}\right)^{\frac{p}{2}}
\E\Big[d_X\big(f(x+ \e),f(x)\big)^p\Big]\bigg)^{\frac{1}{p}}.
\end{multline}
The expectations in~\eqref{eq:metric space is X_p beginning def2} are with respect to $(x,\e)\in \Z_{2m}^n\times \{-1,1\}^n$ chosen uniformly at random. We refer to~\cite{NS14} for a detailed discussion of the meaning of~\eqref{eq:metric space is X_p beginning def2}; see also Sections~\ref{sec:how to apply}, \ref{sec:intro chaos}  below.

The above definition of $X_p$ metric  spaces introduces the auxiliary integer $m\in \N$, which we call the {\em scaling parameter} corresponding to $n$ and $k$. For some purposes $m$ can be allowed to be arbitrary, but for other purposes one needs to obtain good bounds on $m$ (as a function of $n,k$). It can, however, be quite difficult to obtain sharp bounds on scaling parameters in metric inequalities (for example, an analogous question in the context of metric cotype~\cite{MN08} is longstanding and important). In~\cite{NS14} it was proven that if $p\in [2,\infty)$ then $L_p$ is an $X_p$ metric space. The proof in~\cite{NS14} yields the validity of~\eqref{eq:metric space is X_p beginning def2} when $X=L_p$ whenever $m\gtrsim_p n^{3/2}/\sqrt{k}$. It was also shown in~\cite[Proposition~1.4]{NS14} that if $p\in (2,\infty)$ and $k$ is sufficiently large (as a function of $p$) then for~\eqref{eq:metric space is X_p beginning def2} to hold true in $L_p$ one must necessarily have $m\gtrsim_p \sqrt{n/k}$. Conjecture~1.5 of~\cite{NS14} asks whether for every $p\in (2,\infty)$ this lower bound on $m$ actually expresses the asymptotic behavior of the best possible scaling parameter, i.e., whether the metric $X_p$ inequality~\eqref{eq:metric space is X_p beginning def2} holds true in $L_p$ for every $m\gtrsim_p \sqrt{n/k}$. Theorem~\ref{thm:main} below resolves this conjecture positively.

\begin{theorem}[$L_p$ is an $X_p$ metric space with sharp scaling parameter]\label{thm:main} Suppose that $k,m,n\in \N$ satisfy $k\in \n$ and $m\ge \sqrt{n/k}$. Suppose also that $p\in [2,\infty)$. Then every $f:\Z_{8m}^n\to L_p$ satisfies
\begin{multline}\label{eq:desired main}
\bigg(\frac{1}{\binom{n}{k}}\sum_{\substack{\S\subset \n\\ |\S|=k}}\E\Big[\left\|f(x+4m\e_{\S})-f(x)\right\|_{L_p}^p\Big]\bigg)^{\frac{1}{p}}
\\\lesssim_p m\bigg(\frac{k}{n}\sum_{j=1}^n\E\Big[\|f(x+e_j)-f(x)\|_{L_p}^p\Big]+\left(\frac{k}{n}\right)^{\frac{p}{2}}\E\Big[\|f(x+\e)-f(x)\|_{L_p}^p\Big]\bigg)^{\frac{1}{p}},
\end{multline}
where the expectations are taken with respect to $(x,\e)\in \Z_{8m}^n\times \{-1,1\}^n$ chosen uniformly at random.
\end{theorem}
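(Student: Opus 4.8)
The plan is to reduce the metric inequality~\eqref{eq:desired main} to a \emph{linear} vector-valued inequality for functions on the discrete torus $\Z_{8m}^n$, and then to prove that linear inequality by harmonic analysis on $\Z_{8m}^n$, using the Lust-Piquard noncommutative Riesz-transform bounds advertised in the introduction. First I would invoke the standard symmetrization/linearization principle for this type of inequality (as used in~\cite{NS14} and in the metric cotype literature~\cite{MN08}): by a cone/convexity argument it suffices to prove the inequality for $f$ of the form $f(x)=\sum_{S}g_S\cdot\1_{\{\cdots\}}$, and ultimately, since $L_p$ contains $\ell_p$-sums of $L_p$ and the inequality is homogeneous, it is enough to establish a single scalar inequality of the form
\[
\Big(\tfrac{1}{\binom nk}\sum_{|S|=k}\big\|f(\cdot+4m\e_S)-f\big\|_{L_p(\Z_{8m}^n;L_p)}^p\Big)^{1/p}\lesssim_p m\Big(\tfrac kn\sum_{j=1}^n\|\partial_j f\|_p^p+(\tfrac kn)^{p/2}\|f(\cdot+\e)-f\|_p^p\Big)^{1/p},
\]
where one averages over $\e\in\{-1,1\}^n$ as well; the metric statement follows by applying this with $f$ replaced by each one-dimensional ``coordinate'' of an arbitrary $L_p$-valued map. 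This step is essentially bookkeeping, but one must be careful that the scaling parameter is preserved (hence the passage from $\Z_{2m}$ in the definition to $\Z_{8m}$, and the factor $4m$ rather than $m$, which gives room for a telescoping/discrete-derivative decomposition of the long increment $f(x+4m\e_S)-f(x)$).

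The heart of the matter is then the analytic estimate. I would write the long increment as a sum of Fourier multipliers on $\Z_{8m}^n$ applied to $f$: the operator $f\mapsto f(\cdot+4m\e_S)-f$ has multiplier $e^{4\pi i \langle\xi,\e_S\rangle/(8m)}-1$ on the frequency $\xi\in\Z_{8m}^n$, and the point is to dominate the $L_p(\ell_p)$-norm of the family $\{f(\cdot+4m\e_S)-f\}_{|S|=k}$, averaged over $S$ and $\e$, by the ``gradient'' terms on the right. The natural route is to express the averaged multiplier in terms of discrete analogues of the Riesz transforms $R_j$ on $\Z_{8m}^n$ (or on $\Z^n$, after a transference argument à la de Leeuw), because after averaging over $S$ of size $k$ and over the signs $\e$, the relevant symbol becomes a function of $\sum_j (\text{something})$ that factors through the Riesz projections; here the scaling $m\gtrsim\sqrt{n/k}$ is exactly what makes the ``curvature'' term $e^{i t}-1\approx it-t^2/2$ contribute the second ($\ell_p^n$-type, with the $(k/n)^{p/2}$ weight) term on the right rather than blowing up. The deep input is Lust-Piquard's theorem~\cite{Lus98}, which gives the $L_p$-boundedness (with the correct $p$-dependence) of the vector of Riesz transforms in the relevant noncommutative/product setting; I would feed the averaged multiplier identity into that theorem to bound the linear term $\tfrac kn\sum_j\|\partial_j f\|_p^p$, and handle the quadratic remainder by a square-function estimate that again reduces to Riesz/Littlewood--Paley bounds with constants uniform in $n$.

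The main obstacle, and the place where all the work goes, is obtaining the \emph{sharp} power $(k/n)^{p/2}$ and the sharp scaling $m\asymp\sqrt{n/k}$ simultaneously: a naive triangle-inequality or telescoping treatment of $f(x+4m\e_S)-f(x)$ loses polynomial factors in $n$ (this is why~\cite{NS14} only got $m\gtrsim n^{3/2}/\sqrt k$), so the decomposition of the long increment must be done at the level of Fourier multipliers with cancellation exploited, not term by term. Concretely, I expect the crux to be a lemma asserting that the averaged-over-$S$-and-$\e$ multiplier on $\Z_{8m}^n$ admits a representation $c_1 m\cdot(\text{Riesz-type operator applied to }\sum_j\partial_j f)+c_2 m (k/n)^{1/2}\cdot(\text{bounded operator applied to }(f(\cdot+\e)-f))$ with $\|c_i\|$ controlled independently of $m,n,k$ in the regime $m\ge\sqrt{n/k}$; proving such a representation with the right constants — tracking the main linear part against the quadratic curvature part of $e^{it}-1$ and showing the error is absorbed precisely when $m^2\ge n/k$ — is the technical core, and it is exactly here that the Lust-Piquard estimate~\cite{Lus98} (as opposed to a classical, dimension-dependent Riesz bound) is indispensable.
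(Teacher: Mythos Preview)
Your proposal heads in a direction that the paper does \emph{not} take, and at its core the strategy has a genuine gap.

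First, a misconception: you write that ``a naive triangle-inequality or telescoping treatment of $f(x+4m\e_\S)-f(x)$ loses polynomial factors in $n$''. In fact the paper \emph{does} begin with exactly the naive $m$-step telescoping of the long increment (Lemma~\ref{lem:T version with scaling}, equations~\eqref{eq:before x to y}--\eqref{eq:m triangle}); this contributes only the harmless factor of $m$ that already sits on the right-hand side of~\eqref{eq:desired main}. The loss in~\cite{NS14} came from the analytic inequality applied \emph{after} telescoping, not from the telescoping itself. So your plan to avoid telescoping and instead exploit cancellation directly in the Fourier multiplier $e^{4\pi i\langle\xi,\e_\S\rangle/(8m)}-1$ on $\Z_{8m}^n$ is aimed at a non-problem.

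Second, and more seriously, the harmonic analysis in the paper is carried out on the \emph{hypercube} $\{-1,1\}^n$, not on the torus $\Z_{8m}^n$. After telescoping and a smoothing by the averaging operators $\T_{\!\S}$ of~\eqref{eq:TS def}, the paper freezes $x\in\Z_{8m}^n$ and introduces the odd function $h_x(\e)=f(x+2\e)-f(x-2\e)$ on $\{-1,1\}^n$; the long-increment term becomes $\EE_{\n\setminus\S}h_x$, a conditional expectation on the cube. The entire analytic content of the theorem is then a purely hypercube statement (Theorem~\ref{thm:cube h}): for $h\in L_p^0(\{-1,1\}^n)$,
\[
\Big(\tfrac{1}{\binom{n}{k}}\sum_{|\S|=k}\|\EE_{\n\setminus\S}h\|_p^p\Big)^{1/p}
\lesssim_p \Big(\tfrac{k}{n}\sum_{j=1}^n\|\partial_j h\|_p^p+(k/n)^{p/2}\|h\|_p^p\Big)^{1/p}.
\]
This is where Lust-Piquard enters, and in a way quite different from what you sketch. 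The key identity is $\EE_{\n\setminus\S}\Delta_\S^{1/2}\Delta_{\n}^{-1/2}=\EE_{\n\setminus\S}$ on mean-zero functions (Lemma~\ref{lem:use jensen}), which together with Jensen bounds $\|\EE_{\n\setminus\S}h\|_p$ by $\|\Delta_\S^{1/2}\Delta_{\n}^{-1/2}h\|_p$. Lust-Piquard's inequality~\eqref{eq:LP-Riesz-S} then converts $\Delta_\S^{1/2}$ into a Rademacher sum $\sum_{j\in\S}\d_j\partial_j(\Delta_{\n}^{-1/2}h)$; at this point the \emph{scalar} linear $X_p$ inequality~\eqref{eq:linearXp} of Johnson--Maurey--Schechtman--Tzafriri is applied pointwise in $\e$ to the numbers $a_j=\partial_j\Delta_{\n}^{-1/2}h(\e)$, and a second use of Lust-Piquard together with the bound $\|\Delta_{\n}^{-1/2}\|_{p\to p}\lesssim\sqrt{\log p}$ (Lemma~\ref{lem:inverse laplacian}) closes the argument. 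None of this involves Riesz transforms on $\Z_{8m}^n$, curvature expansions of $e^{it}-1$, or a multiplier ``representation lemma'' of the kind you conjecture; the scaling condition $m\ge\sqrt{n/k}$ enters only at the very end, to absorb the error $1/m^p$ coming from the smoothing (Lemma~\ref{lem:T close to identity}) into the $(k/n)^{p/2}$ term.
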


\begin{remark}
Our proof of Theorem~\ref{thm:main} shows that the implicit constant in~\eqref{eq:desired main} is $O(p^4/\log p)$. As explained in~\cite{NS14}, this constant must be at least a (universal) constant multiple of $p/\log p$. While it is conceivable that a more careful implementation of our approach could somewhat decrease the dependence on $p$ that we obtain, it seems that a new idea is required in order to establish the sharp dependence of $O(p/\log p)$ in~\eqref{eq:desired main} (if true). We leave the question of determining the correct asymptotic dependence on $p$ in~\eqref{eq:desired main} as an interesting (and perhaps quite challenging) open question.
\end{remark}

\subsubsection{Applications of Theorem~\ref{thm:main}}\label{sec:how to apply} The usefulness of the metric $X_p$ inequality for $L_p$ stems in part from the fact that it allows one to rule out the existence of metric embeddings in situations where the classical differentiation techniques fail. Examples of such situations include the treatment of discrete sets as in Theorem~\ref{thm:grid distortion}, where it isn't clear how to interpret the notion of derivative, as well as the treatment of H\"older mappings as in Theorem~\ref{thm:snowflake sharp}, where, unlike the Lipschitz case, mappings need not have any point of differentiability. In fact, by~\cite[Theorem~1.14]{NS14} both Theorem~\ref{thm:grid distortion} and Theorem~\ref{thm:snowflake sharp} follow from Theorem~\ref{thm:main}. For completeness, we shall now briefly sketch why this is so.

Suppose that $2\le q<p<\infty$ and $m,n\in \N$. It is simple to check, as done in~\cite[Lemma~3.1]{NS14}, that there exists $h:\Z_{8m}^n\to [32m]^n_q$ such that for $(x,\e)\in \Z_{8m}^n\times \{-1,1\}^n$, $\S\subset \n$ and $j\in \n$,
\begin{equation*}\label{eq:h properties}
\|h(x+4m\e_\S)-h(x)\|_{\ell_q^n}\asymp m|\S|^{\frac{1}{q}} \quad\mathrm{and}\quad \|h(x+e_j)-h(x)\|_{\ell_q^n}\asymp 1\quad\mathrm{and}\quad \|h(x+\e)-h(x)\|_{\ell_q^n}\asymp n^{\frac{1}{q}}.
\end{equation*}
Fix $D\in [1,\infty)$ and suppose that $\phi:[32m]^n_q\to L_p$ satisfies $\|x-y\|_{\ell_q^n}\le \|\phi(x)-\phi(y)\|_{L_p}\le D\|x-y\|_{\ell_q^n}$ for every $x,y\in [32m]_q^n$. An application of Theorem~\ref{thm:main} to $f=h\circ \phi$ (with $m$ replaced by $4m$), which we are allowed to do only when $k\in \n$ is  such that $4m\ge \sqrt{n/k}$, yields the bound
\begin{equation}\label{eq:max k 1}
D\gtrsim_p \max_{\substack{k\in \n\\ k\ge n/(16m^2)}}\frac{k^{\frac{1}{q}}}{\left(k+k^{\frac{p}{2}}n^{\frac{p}{q}-\frac{p}{2}}\right)^{\frac{1}{p}}}.
\end{equation}
By evaluating the maximum in~\eqref{eq:max k 1}, one arrives at the asymptotic lower bound on $c_p([32m]_q^n)$ that appears in~\eqref{eq:grid distortion formula}. As we explained earlier, the matching upper bound in~\eqref{eq:grid distortion formula} corresponds to the better of two explicit embeddings that are described in equations (11) and (27) of~\cite{NS14}.  This completes the deduction of Theorem~\ref{thm:grid distortion}. Next, fix $L\in [1,\infty)$ and $\theta\in (0,1]$. Suppose that $\psi:L_q\to L_p$ satisfies $\|x-y\|_{L_q}^\theta\le \|\psi(x)-\psi(y)\|_{L_p}\le L\|x-y\|_{L_q}^\theta$ for every $x,y\in L_q$. For $k,n\in \N$ with $k\in \n$, fix $m=\lceil \sqrt{n/(2k)}\rceil$ and apply Theorem~\ref{thm:main} to $f=\psi\circ h$. The estimate thus obtained is
$$
\left(\frac{n}{k}\right)^{\frac{\theta}{2}} k^{\frac{\theta}{q}}\lesssim_p L\sqrt{\frac{n}{k}}\left(k+\left(\frac{k}{n}\right)^{\frac{p}{2}} n^{\frac{\theta p}{q}}\right)^{\frac{1}{p}}.
$$
Hence, for every $n\in \n$ we have
\begin{equation}\label{eq:max k 2}
1\lesssim_p Ln^{\frac{1-\theta}{2}}\cdot  \min_{k\in \n} \left(k+k^{\frac{p}{2}} n^{p\left(\frac{\theta}{q}-\frac12\right)}\right)^{\frac{1}{p}}k^{\theta\left(\frac12-\frac{1}{q}\right)-\frac12}.
\end{equation}
Theorem~\ref{thm:snowflake sharp} now follows by choosing the optimal $k$ in~\eqref{eq:max k 2} and letting $n\to\infty$; complete details of this computation appear in the proof of Theorem~1.14 in~\cite{NS14}.

\subsection{Hypercube Riesz transforms and an $X_p$ inequality for Rademacher chaos}\label{sec:intro chaos} Fixing $n\in \N$, for every $h:\{-1,1\}^n\to \R$ and $j\in \n$ let $\partial_j h:\{-1,1\}^n\to \R$ be given by
\begin{equation}\label{eq:def partial j}
\forall\, \e\in \{-1,1\}^n,\qquad \partial_j h(\e)\eqdef h(\e)-h(\e_1,\ldots,\e_{j-1},-\e_j,\e_{j+1},\ldots,\e_n).
\end{equation}
Also, given $\S\subset \n$ we shall denote by $\EE_\S f:\{-1,1\}^n\to \R$ the function that is obtained from $h$ by averaging over the coordinates in $\S$, i.e.,
recalling the notation~\eqref{eq:def eps S}, we define
\begin{equation}\label{eq:def E operator}
\forall\, \e\in \{-1,1\}^n,\qquad \EE_{\S}h(\e)\eqdef \frac{1}{2^n} \sum_{\d\in \{-1,1\}^n} h\left(\d_{\S}+\e_{\n\setminus \S}\right).
\end{equation}
In particular, $\EE_\S h$ depends only on those entries of $\e\in \{-1,1\}^n$ that belong to $\n\setminus \S$. Given $p\in [1,\infty)$, we shall reserve from now on the notation $\|h\|_p$ exclusively for the $L_p$ norm of $h$ with respect to the {\em normalized} counting measure on the discrete hypercube $\{-1,1\}^n$, i.e.,
$$
\|h\|_p\eqdef \bigg(\frac{1}{2^n}\sum_{\e\in \{-1,1\}^n} |h(\e)|^p\bigg)^{\frac{1}{p}}=\left(\EE_{\n} |h|^p\right)^{\frac{1}{p}}.
$$
In what follows, $L_p^0(\{-1,1\}^n)$ denotes the subspace of all those $h\in L_p(\{-1,1\}^n)$ with $\EE_{\n}h=0$.

We shall work with the usual Fourier--Walsh expansion of a function $h:\{-1,1\}^n\to \R$. Thus, for every $\A\subset\n $ consider the corresponding Walsh function $W_\A:\{-1,1\}^n\to \R$ given by
$$
\forall\, \e\in \{-1,1\}^n,\qquad W_\A(\e)\eqdef \prod_{j\in \A} \e_j,
$$
and denote
$$
\hat{h}(\A)\eqdef\frac{1}{2^n}\sum_{\e\in \{-1,1\}^n} h(\e)W_\A.
$$
Then we have
$$
\forall\,\e\in \{-1,1\}^n,\qquad h(\e)=\frac{1}{2^n}\sum_{\A\subset \n} \hat{h}(\A)W_\A(\e).
$$

In probabilistic terminology, the above representation of $h$ as a multilinear polynomial in the variables $\e_1,\ldots,\e_n$ expresses it as Rademacher chaos. A useful inequality for Rademacher chaos of the first degree, i.e., for weighted sums of i.i.d. Bernoulli random variables, served as the inspiration for the metric $X_p$ inequality~\eqref{eq:metric space is X_p beginning def2}. Specifically, \eqref{eq:metric space is X_p beginning def2} is a nonlinear extension of the following inequality, which holds true for every $p\in [2,\infty)$, $k,n\in \N$ with $k\in \n$, and every $a_1,\ldots,a_n\in \R$.
\begin{equation}\label{eq:linearXp}
\bigg(\frac{1}{2^n\binom{n}{k}}\sum_{\substack{\S\subset \n\\ |\S|=k}}\sum_{\e\in \{-1,1\}^n}\Big|\sum_{j\in \S}\e_ja_j\Big|^p\bigg)^{\frac{1}{p}}\lesssim \frac{p}{\log p}\bigg(\frac{k}{n}\sum_{j=1}^n |a_j|^p+\frac{(k/n)^{\frac{p}{2}}}{2^n}\sum_{\e\in \{-1,1\}^n}\Big|\sum_{j=1}^n \e_ja_j\Big|^p\bigg)^{\frac{1}{p}}.
\end{equation}
This inequality is due to Johnson, Maurey, Schechtman and Tzafriri, who proved it in~\cite{JMST79} with a constant factor that grows to $\infty$ with $p$  faster than the $p/\log p$ factor that appears in~\eqref{eq:linearXp}. The factor $p/\log p$ that is stated in~\eqref{eq:linearXp} is best possible; in the above sharp form, \eqref{eq:linearXp} is due to Johnson, Schechtman and Zinn~\cite{JSZ85}. As a step towards Theorem~\ref{thm:main}, we shall  prove the following theorem in  Section~\ref{sec:chaos} below, thus extending~\eqref{eq:linearXp} to Rademacher chaos of arbitrary degree.

 \begin{theorem}[$X_p$ inequality for Rademacher chaos]\label{thm:cube h}
Suppose that $p\in [2,\infty)$, $n\in \N$ and $k\in \n$. Then every $h\in L_p^0(\{-1,1\}^n)$ satisfies
\begin{equation}\label{eq:on cube p implicit}
\bigg(\frac{1}{\binom{n}{k}} \sum_{\substack{\S\subset \n\\ |\S|=k}}\left\|\mathsf{E}_{\n\setminus \S} h \right\|_{p}^p\bigg)^{\frac{1}{p}} \lesssim_p
\bigg(\frac{k}{n} \sum_{j=1}^n \|\partial_j h\|_{p}^p +\left(\frac{k}{n}\right)^{\frac{p}{2}} \|h\|_{p}^p\bigg)^{\frac{1}{p}}.
\end{equation}
\end{theorem}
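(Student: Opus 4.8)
The plan is to reduce the vector-valued/chaos inequality \eqref{eq:on cube p implicit} to a statement about a single averaging operator on the hypercube and then to control that operator by harmonic-analytic tools, in particular the Riesz transform estimates of Lust-Piquard that the introduction flags as the key input. First I would diagonalize everything in the Fourier--Walsh basis: $\partial_j$ acts on $W_\A$ by multiplication by $2\cdot\1[j\in\A]$, the operator $\EE_{\n\setminus\S}$ is the orthogonal projection onto $\mathrm{span}\{W_\A:\A\subset\S\}$, and so the left-hand side of \eqref{eq:on cube p implicit} is an average over $k$-subsets $\S$ of $\|\sum_{\A\subset\S}\hat h(\A)W_\A\|_p^p$. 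The natural move is to write $\EE_{\n\setminus\S}$ as a random operator: choosing $\S$ uniformly among $k$-subsets is, up to standard comparisons, the same as keeping each coordinate independently with probability $k/n$, i.e.\ $\EE_{\n\setminus\S}$ is essentially the ``random restriction'' / noise-type operator whose $L_p\to L_p$ behaviour one wants to bound in terms of $\sum_j\|\partial_j h\|_p$ (the gradient term) and $\|h\|_p$ (the zeroth-order term).

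The core analytic step is then a Littlewood--Paley / Riesz-transform decomposition. I would express the identity on $L_p^0$ as a sum of the hypercube Riesz transforms $R_j = \partial_j (-\Delta)^{-1/2}$ (where $\Delta=\sum_j\partial_j$ is the discrete Laplacian), using $\sum_j R_j^2 = -(-\Delta)^{-1}\Delta = \mathrm{Id}$ on mean-zero functions, so that $h = \sum_j R_j\big(R_j h\big)$ and, heuristically, $h$ is built out of the ``derivatives'' $\partial_j h$ via operators that are bounded on $L_p$. Lust-Piquard's theorem \cite{Lus98} gives exactly the vector-valued boundedness $\big\|(\sum_j |R_j g_j|^2)^{1/2}\big\|_p \lesssim_p \big\|(\sum_j |g_j|^2)^{1/2}\big\|_p$ (and its dual), which is what lets one pass from $\|h\|_p$-type quantities to square-function quantities in the $\partial_j h$'s. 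Combining this with the fact that under the random restriction each surviving Fourier level $|\A|=d$ gets damped by roughly $(k/n)^d$ — so that on the first Walsh level one sees the linear gain $k/n$ matching the first term on the right, while the overall operator norm on mean-zero functions is $O(\sqrt{k/n})$ matching the second term after raising to the $p$-th power — should produce \eqref{eq:on cube p implicit}. Concretely I would split $h = h^{\le 1} + h^{\ge 2}$ into its low- and high-degree parts; the degree-$\le 1$ part is handled by the linear inequality \eqref{eq:linearXp} (applied conditionally, coordinate by coordinate, to the affine function $\e\mapsto \sum_j \e_j \partial_j h(\ldots)$, which is the right nonlinear-to-linear bridge), and the degree-$\ge 2$ part is where the $\sqrt{k/n}$ contraction of the restriction operator, proved via the Riesz-transform square-function estimate, absorbs everything into $(k/n)^{p/2}\|h\|_p^p$.

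The main obstacle I anticipate is making the ``$L_p$-norm of a random restriction of $h$ is at most $\sqrt{k/n}$ times $\|h\|_p$ on high-degree functions'' step quantitative with the correct, dimension-free dependence on $p$ — a naive hypercontractivity argument loses too much, and one genuinely needs a Riesz-transform (rather than semigroup) estimate to keep the constant polynomial in $p$ and to handle the fact that $\EE_{\n\setminus\S}$ is not a function of the Laplacian alone but a coordinate projection. Getting the bookkeeping right between the ``uniform $k$-subset'' model and the ``independent $p=k/n$ coordinates'' model, and transferring the one-sided square-function bounds through duality so that both the gradient term and the full-chaos term appear with the stated exponents, will be the delicate part; the rest is combining \eqref{eq:linearXp} with \cite{Lus98} and standard Fourier--Walsh manipulations. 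Once \eqref{eq:on cube p implicit} is in hand, the passage to \eqref{eq:desired main} for $L_p$-valued $f$ on $\Z_{8m}^n$ (Theorem~\ref{thm:main}) is the separate, more combinatorial argument carried out later in the paper.
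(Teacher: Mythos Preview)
Your proposal identifies the right ingredients (Lust-Piquard's Riesz transform bounds and the linear $X_p$ inequality~\eqref{eq:linearXp}), but the way you combine them has a genuine gap.

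The decomposition $h=h^{\le 1}+h^{\ge 2}$ with the high-degree part ``absorbed into $(k/n)^{p/2}\|h\|_p^p$'' is simply false for $p>4$. Take $h=W_\A$ with $|\A|=2$. Then $\EE_{\n\setminus\S}W_\A=\1_{\{\A\subset\S\}}W_\A$, so
\[
\bigg(\frac{1}{\binom{n}{k}}\sum_{|\S|=k}\|\EE_{\n\setminus\S}W_\A\|_p^p\bigg)^{1/p}
=\Pr[\A\subset\S]^{1/p}\asymp (k/n)^{2/p},
\]
while $\sqrt{k/n}\,\|W_\A\|_p=(k/n)^{1/2}$. For $p>4$ one has $(k/n)^{2/p}\gg (k/n)^{1/2}$, so the claimed $\sqrt{k/n}$ contraction on degree-$\ge 2$ chaos fails. (The full inequality~\eqref{eq:on cube p implicit} survives for this $h$ only because the gradient term $\frac{k}{n}\sum_j\|\partial_j W_\A\|_p^p\asymp k/n$ picks up the slack; the $(k/n)^{p/2}$ term alone does not.) So the high-degree part cannot be decoupled from the gradient term, and no amount of Bernoulli-vs-uniform bookkeeping or square-function duality will rescue the split.

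The paper does \emph{not} split by degree. The key idea you are missing is a pointwise operator identity: for any $\alpha$ and any nonempty $\A$, one has $\EE_{\n\setminus\S}\Delta_\S^{\alpha}\Delta_{\n}^{-\alpha}W_\A=\EE_{\n\setminus\S}W_\A$, hence $\EE_{\n\setminus\S}\Delta_\S^{1/2}\Delta_{\n}^{-1/2}=\EE_{\n\setminus\S}$ on $L_p^0$. Jensen then gives $\|\EE_{\n\setminus\S}h\|_p\le\|\Delta_\S^{1/2}\Delta_{\n}^{-1/2}h\|_p$. Now Lust-Piquard's inequality, applied on the coordinates in $\S$, converts $\|\Delta_\S^{1/2}g\|_p$ into a Rademacher average $\big(2^{-n}\sum_\d\|\sum_{j\in\S}\d_j\partial_j g\|_p^p\big)^{1/p}$ with $g=\Delta_{\n}^{-1/2}h$. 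At this point, for each fixed $\e$, you have genuine scalars $a_j=\partial_j\Delta_{\n}^{-1/2}h(\e)$, and the linear inequality~\eqref{eq:linearXp} applies directly to the \emph{whole} function, not just its linear part. The two right-hand terms of~\eqref{eq:linearXp} then become $\frac{k}{n}\sum_j\|\partial_j\Delta_{\n}^{-1/2}h\|_p^p$ and $(k/n)^{p/2}\cdot 2^{-n}\sum_\d\|\sum_j\d_j\partial_j\Delta_{\n}^{-1/2}h\|_p^p$; the first is handled by the bound $\|\Delta_{\n}^{-1/2}\|_{p\to p}\lesssim\sqrt{\log p}$ (which commutes with $\partial_j$), and the second by a second application of Lust-Piquard, collapsing back to $\|h\|_p$. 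So the linear $X_p$ inequality is applied not ``conditionally to the affine part'' but pointwise to the Riesz transforms of $h$ itself---that is the bridge you were looking for.
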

The deduction of Theorem~\ref{thm:main} from Theorem~\ref{thm:cube h} appears in Section~\ref{sec:implication} below.

\begin{remark}
As in~\eqref{eq:desired main}, the implicit constant that we obtain in~\eqref{eq:on cube p implicit} is $O(p^4/\log p)$. In fact, our proof yields the following slightly more refined estimate in the setting of Theorem~\ref{thm:cube h}.
\begin{equation}\label{eq:on cube bext we can do in terms of p}
\bigg(\frac{1}{\binom{n}{k}} \sum_{\substack{\S\subset \n\\ |\S|=k}}\left\|\mathsf{E}_{\n\setminus \S} h \right\|_{p}^p\bigg)^{\frac{1}{p}} \lesssim \frac{p^{\frac{5}{2}}}{\sqrt{\log p}}\left(\frac{k}{n}\right)^{\frac{1}{p}}\bigg(\sum_{j=1}^n \|\partial_j h\|_{p}^p\bigg)^{\frac{1}{p}}+\frac{p^4}{\log p}\sqrt{\frac{k}{n}}\cdot \|h\|_{p} .
\end{equation}
It remains open to determine the growth rate as $p\to \infty$ of the implicit constant in~\eqref{eq:on cube p implicit}.
\end{remark}

\subsubsection{Lust-Piquard's work} Our proof of Theorem~\ref{thm:cube h} uses deep work~\cite{Lus98} of Lust-Piquard on dimension-free bounds for discrete Riesz transforms. Specifically, for every $h:\{-1,1\}^n\to \R$ and $j\in \n$ the $j$th (hypercube) Riesz transform of $h$, denoted $\mathsf{R}_jh:\{-1,1\}^n\to \R$, is defined as follows.
\begin{equation}\label{eq:def Rj}
\forall\, \e\in \{-1,1\}^n,\qquad \mathsf{R}_jh(\e)\eqdef \sum_{\substack{\A\subset \n\\ j\in \A}} \frac{\hat{h}(\A)}{\sqrt{|\A|}}W_{\A}(\e).
\end{equation}
Lust-Piquard proved the following inequalities, which hold true for $p\in [2,\infty)$ and $h\in L_p^0(\{-1,1\}^n)$.
\begin{equation}\label{eq:LP square function}
\frac{1}{p^{3/2}} \|h\|_{p}\lesssim \bigg\|\Big(\sum_{j=1}^n (\mathsf{R}_jh)^2\Big)^{\frac12}\bigg\|_{p}\lesssim p \|h\|_{p}.
\end{equation}

The inequalities in~\eqref{eq:LP square function} were proved by Lust-Piquard in~\cite{Lus98}, though with a dependence on $p$ that is worse than what we stated above. The dependence on $p$ that appears in~\eqref{eq:LP square function} follows from~\cite{EL08}. Note that these estimates are stated in~\cite{EL08} in terms of the strong $(p,p)$  norm of the Hilbert transform with values in the Schatten--von Neumann trace class $\S_p$, but this norm was shown to be $O(p)$ by Bourgain in~\cite{Bou-hilbert}, and the bounds that we stated in~\eqref{eq:LP square function} result from a direct substitution of Bourgain's bound into the statements in~\cite{EL08}.

The availability of dimension independent bounds for Riesz transforms is a well known paradigm in other (non-discrete) settings, originating from important classical work of Stein~\cite{Ste83} in the case of $\R^n$ equipped with Lebesgue measure (see also~\cite{GV79,DR85,Ban86}). Most pertinent to the present context is the classical theorem of P.~A. Meyer~\cite{Mey84} (see also~\cite{Gun86}) that obtained dimension independent bounds for the Riesz transforms that are associated to $\R^n$ equipped with the Gaussian measure (and the Ornstein--Uhlenbeck operator). Pisier discovered in~\cite{Pis88} an influential alternative proof of P.~A. Meyer's theorem, based on a transference argument (see~\cite{CW76}) that allows one to reduce the question to the boundedness of the (one dimensional) Hilbert transform.

Lust-Piquard's work generally follows Pisier's strategy, but it also uncovers a phenomenon that is genuinely present in the hypercube setting and {\em not} in the Gaussian setting. Specifically, Lust-Piquard reduces the task of bounding the hypercube Riesz transforms to that of bounding the $\S_p$ norm of certain operators in a noncommutative $*$ algebra of ($2^n$ by $2^n$) matrices, and proceeds to do so using operator-theoretic methods, including her noncommutative Khinchine inequalities~\cite{Lus86}.

This indicates why the $\S_p$-valued Hilbert transform makes its appearance in Lust-Piquard's inequality (recall the paragraph above, immediately following~\eqref{eq:LP square function}), despite the fact that~\eqref{eq:LP square function} deals with real-valued functions on the (commutative) hypercube. Significantly, while the classical results on Riesz transforms (with respect to either Lebesgue measure or the Gaussian measure) yield dimension independent bounds for every $p\in (1,\infty)$, it turns out that~\eqref{eq:LP square function} actually fails to hold true when $p\in (1,2)$, as explained in~\cite{Lus98} (where this observation is attributed to unpublished work of Lamberton); see also~\cite[Section~5.5]{EL08}. The reason for this disparity between the ranges $p\in (1,2)$ and $p\in [2,\infty)$ becomes clear when one transfers the question to the noncommutative setting, and this suggests a more complicated (but still dimension-free) replacement for~\eqref{eq:LP square function} in the range $p\in (1,2)$, which Lust-Piquard also proved in~\cite{Lus98}. So, while it is conceivable that a proof of~\eqref{eq:LP square function} could be found that does not proceed along Lust-Piquard's noncommutative route, such a proof has not been found to date, and the qualitative divergence between the discrete situation and its continuous counterparts indicates that there may be an inherently different phenomenon at play here. Since its initial publication, Lust-Piquard's work influenced developments by herself and others that focused on proving related inequalities in other situations; we do not have anything new to add to this interesting body of work other than showing here that in addition to their intrinsic interest, such results can have a decisive role in understanding geometric embedding questions.

\section{Deduction of Theorem~\ref{thm:main} from Theorem~\ref{thm:cube h}}\label{sec:implication}

Assuming the validity of Theorem~\ref{thm:cube h} for the moment, we shall now proceed to show how it implies Theorem~\ref{thm:main}. Note that since~\eqref{eq:desired main} involves only the $p$th powers of distances in $L_p$, by integration it suffices to prove Theorem~\ref{thm:main} for real valued functions. So, from now on we shall assume that $m,n\in \N$ and we are given a function $f:\Z_{8m}^n\to \R$, the goal being to prove the validity of~\eqref{eq:desired main} for every $k\in \n$ provided that $m\ge \sqrt{n/k}$, with the $L_p$ norms replaced by absolute values in $\R$.

In what follows, given $\S\subset \n$ and $f:\Z_{8m}^n\to \R$, define a function $\T_{\! \S}f: \Z_{8m}^n\to \R$ by
\begin{equation}\label{eq:TS def}
\forall\, x\in \Z_{8m}^n,\qquad \T_{\! \S}f(x)\eqdef \frac{1}{2^n}\sum_{\d\in \{-1,1\}^n} f(x+2\d_\S).
\end{equation}
We record for future use the following simple lemma.

\begin{lemma}\label{lem:T close to identity}
For every $p\in [1,\infty)$, $m,n\in \N$, $\S\subset \n$ and $f:\Z_{8m}^n\to \R$ we have
\begin{equation}\label{eq:T factor 2}
\bigg(\frac{1}{(8m)^n} \sum_{x\in \Z_{8m}^n} |f(x)-\T_{\! \S}f(x)|^p\bigg)^{\frac{1}{p}}\le 2\bigg(\frac{1}{(16m)^n}\sum_{\e\in \{-1,1\}^n} \sum_{x\in \Z_{8m}^n} |f(x+\e)-f(x)|^p\bigg)^{\frac{1}{p}}.
\end{equation}
\end{lemma}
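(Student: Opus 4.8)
The plan is to bound the left-hand side of~\eqref{eq:T factor 2} by a telescoping argument that writes $f-\T_{\!\S}f$ as an average of directed differences, each of which is then controlled by a sum of single-edge differences. Recall from~\eqref{eq:TS def} that
\[
f(x)-\T_{\!\S}f(x)=\frac{1}{2^n}\sum_{\d\in\{-1,1\}^n}\bigl(f(x)-f(x+2\d_\S)\bigr),
\]
so by convexity of $t\mapsto |t|^p$ it suffices to bound the average over $\d$ of $\|f-f(\cdot+2\d_\S)\|_p^p$, where here and below $\|\cdot\|_p$ denotes the $L_p$ norm with respect to the uniform measure on $\Z_{8m}^n$. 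Note that $2\d_\S=\d_\S+\d_\S$, and $\d_\S$ is a $\pm1$ vector supported on $\S$; the idea is that a step by $2\d_\S$ can be realized by first stepping by $\d$ (a genuine $\{-1,1\}^n$ vector) and then stepping back by $\d-2\d_\S=\d_{\n\setminus\S}-\d_\S$, which is again a $\{-1,1\}^n$ vector. Thus for each fixed $\d$,
\[
f(x)-f(x+2\d_\S)=\bigl(f(x)-f(x+\d)\bigr)+\bigl(f(x+\d)-f(x+\d+(\d_{\n\setminus\S}-\d_\S))\bigr),
\]
and both bracketed terms are differences along a full $\{-1,1\}^n$ direction.

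The key steps, in order, are as follows. First, apply the convexity bound above to reduce to estimating $\frac{1}{2^n}\sum_\d\|f-f(\cdot+2\d_\S)\|_p^p$. Second, use the two-term splitting just described together with the elementary inequality $|a+b|^p\le 2^{p-1}(|a|^p+|b|^p)$; after the measure-preserving substitutions $x\mapsto x$ and $x\mapsto x-\d$ on the respective terms, each of the two resulting sums has the form $\|f-f(\cdot+\eta)\|_p^p$ for some $\eta\in\{-1,1\}^n$ (namely $\eta=\d$ and $\eta=\d_{\n\setminus\S}-\d_\S$). Third, observe that as $\d$ ranges uniformly over $\{-1,1\}^n$, so does each of $\d$ and $\d_{\n\setminus\S}-\d_\S$ (the latter because flipping the sign of $\d_j$ for $j\in\S$ is a bijection of $\{-1,1\}^n$), so averaging over $\d$ turns both sums into $\frac{1}{2^n}\sum_{\e\in\{-1,1\}^n}\|f-f(\cdot+\e)\|_p^p$. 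Collecting constants gives $\|f-\T_{\!\S}f\|_p^p\le 2^{p-1}\cdot\frac{2}{2^n}\sum_\e\|f-f(\cdot+\e)\|_p^p$, and taking $p$th roots yields the factor $2\cdot(2^p/2^n\cdot\ldots)^{1/p}$ — one checks the bookkeeping so that the final constant is exactly $2$ as in~\eqref{eq:T factor 2}, matching the normalization by $(16m)^n=2^n(8m)^n$ on the right-hand side of~\eqref{eq:T factor 2}.

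I do not anticipate a serious obstacle here; this is a routine telescoping-plus-convexity estimate, and the only point requiring a little care is the constant. The mild subtlety is to choose the two-term decomposition so that the symmetric-difference vector $\d_{\n\setminus\S}-\d_\S$ is genuinely a $\{-1,1\}^n$ vector (it is, since each coordinate is $\pm1$) and so that the change of variables $x\mapsto x-\d$ is measure preserving on $\Z_{8m}^n$ (it is, being a translation). Getting the constant down to precisely $2$ rather than something like $2^{2-1/p}$ requires noting that $2^{p-1}$ from the convexity inequality combines with the two copies of the edge-sum and with the $2^n$ in the denominator exactly to produce $\|f-\T_{\!\S}f\|_p\le 2\|(\,\cdot\,)\|_p$ after extracting the root; if one is slightly wasteful the constant becomes a universal $O(1)$, which is already more than enough for all later applications, so the exact value is not essential.
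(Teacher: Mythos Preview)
Your approach is essentially identical to the paper's proof: convexity, then the two-step splitting $f(x)-f(x+2\d_\S)=(f(x)-f(x+\d))+(f(x+\d)-f(x+2\d_\S))$ with $|a+b|^p\le 2^{p-1}(|a|^p+|b|^p)$, then translation invariance and the observation that both $\d$ and the reflected sign vector are uniform on $\{-1,1\}^n$. The only slip is a sign error in your displayed identity: $x+\d+(\d_{\n\setminus\S}-\d_\S)=x+2\d_{\n\setminus\S}$, not $x+2\d_\S$; the correct shift is $\d_\S-\d_{\n\setminus\S}$ (as your preceding verbal description ``stepping back by $\d-2\d_\S$'' in fact says), and with that correction the bookkeeping and the constant $2$ go through exactly as you outline.
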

\begin{proof}
By convexity, for every $x\in \Z_{8m}^n$ we have
\begin{multline}\label{eq:simple triangle}
|f(x)-\T_{\! \S}f(x)|^p\le \frac{1}{2^n} \sum_{\d\in \{-1,1\}^n} \left|f(x)-f(x+2\d_{\S})\right|^p  \\\le \frac{2^{p-1}}{2^n} \sum_{\d\in \{-1,1\}^n} \Big(\left|f(x)-f(x+\d_{\S}+\d_{\n\setminus \S})\right|^p+ \left|f(x+\d_{\S}+\d_{\n\setminus \S})-f(x+2\d_{\S})\right|^p\Big).
\end{multline}
The desired estimate~\eqref{eq:T factor 2} follows by averaging~\eqref{eq:simple triangle} over $x\in \Z_{8m}^n$ while using the translation invariance of the uniform measure on $\Z_{8m}^n$, and that if $\d$ is uniformly distributed over $\{-1,1\}^n$ then the sign vectors  $\d_\S+\d_{\n\setminus \S}$ and $-\d_\S+\d_{\n\setminus \S}$ are both also uniformly distributed over $\{-1,1\}^n$.
\end{proof}

\begin{lemma}\label{lem:T version with scaling} Suppose that $m,n\in \N$ and $k\in \n$. If $p\in [2,\infty)$ then every $f:\Z_{8m}^n\to \R$ satisfies
\begin{multline}\label{eq:T version}
\frac{1}{(16m)^n\binom{n}{k}}\sum_{\substack{\S\subset \n\\ |\S|=k}}\sum_{\e\in \{-1,1\}^n}\sum_{x\in \Z_{8m}^n}\frac{\left|\T_{\!\n\setminus \S}f(x+4m\e_{\S})-\T_{\!\n\setminus \S}f(x)\right|^p}{m^p}\\
\lesssim_p \frac{k/n}{(8m)^n}\sum_{j=1}^n\sum_{x\in \Z_{8m}^n}|f(x+e_j)-f(x)|^p+\frac{(k/n)^{\frac{p}{2}}}{(16m)^n}\sum_{\e\in \{-1,1\}^n}\sum_{x\in \Z_{8m}^n} |f(x+\e)-f(x)|^p.
\end{multline}
\end{lemma}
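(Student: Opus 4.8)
The plan is to derive Lemma~\ref{lem:T version with scaling} from the hypercube $X_p$ inequality of Theorem~\ref{thm:cube h} by transference: out of $f$ one manufactures a function on a discrete cube to which Theorem~\ref{thm:cube h} applies, and then reads the resulting estimate back on $\Z_{8m}^n$. The smoothing operators $\T_{\!\n\setminus\S}$ are present precisely so that this transference is legitimate, and the factor $m^{-p}$ on the left-hand side of~\eqref{eq:T version} is what will absorb the $m$-dependent losses incurred along the way.

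First I would set up the dictionary between the two worlds. The map $x\mapsto x+4m\e_\S$ is an involution of $\Z_{8m}^n$ that, in each coordinate $j\in\S$, merely toggles the ``half-period bit'' $\lfloor x_j/(4m)\rfloor$; in particular it does not depend on $\e$, which explains why the left-hand side of~\eqref{eq:T version} is insensitive to $\e$. Accordingly, for a uniformly random auxiliary base point $x\in\Z_{8m}^n$ one encodes the coordinates of $\Z_{8m}^n$ inside a discrete cube $\{-1,1\}^N$, with $N$ a multiple of $n$ whose multiplicity is dictated by the step-$2$ averages defining $\T_{\!\S}$, so that $f$ induces a function $h=h_x\colon\{-1,1\}^N\to\R$ for which flipping the cube coordinate(s) attached to $j$ realizes the half-period shift in the $j$-th coordinate of $\Z_{8m}$, while the conditional-expectation operator of the cube over the coordinates attached to $\n\setminus\S$ corresponds to $\T_{\!\n\setminus\S}$ on $\Z_{8m}^n$. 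With this dictionary, after averaging over $x$, the left-hand side of~\eqref{eq:T version} is identified, up to the factor $m^{-p}$ and an absolute constant, with a quantity of the form $\frac{1}{\binom{n}{k}}\sum_{|\S|=k}\E_x\|\EE_{\n\setminus\S}(\,\cdot\,)\|_p^p$, to which Theorem~\ref{thm:cube h} applies --- modulo ``boundary'' contributions arising from those $\pm2$-translations in $\T_{\!\n\setminus\S}$ that straddle a half-period (an $O(1/m)$ fraction of configurations in each coordinate), which involve only single-coordinate and full-vector differences of $f$ and are thus absorbed into the right-hand side of~\eqref{eq:T version}.

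Once Theorem~\ref{thm:cube h} has been invoked, after subtracting the cube-mean so that the function in question lies in $L_p^0(\{-1,1\}^N)$, it remains to translate the two resulting terms $\frac{k}{n}\sum_j\|\partial_j h\|_p^p$ and $(k/n)^{p/2}\|h\|_p^p$ back to $\Z_{8m}^n$. For the first term, $\partial_j h$ corresponds to a single-coordinate translation of $f$ by an amount that is $O(m)$; telescoping it into unit steps costs at most a factor $O(m)^p$, and after averaging over $x$, averaging over $\S$ (which produces the weight $k/n$), and dividing by $m^p$, one recovers exactly $\frac{k}{n}\sum_{j=1}^n\E_x|f(x+e_j)-f(x)|^p$ with a constant depending only on $p$. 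For the second term, $\|h\|_p$ measures the oscillation of $f$ over a block of side $\asymp m$ in each coordinate; writing that oscillation as an average of differences $f\big(x+4m\sum_{j\in T}e_j\big)-f(x)$ over random subsets $T$, passing from these coordinatewise shifts to $O(m)$ full-vector steps, and invoking Lemma~\ref{lem:T close to identity}, one bounds $\E_x\|h\|_p^p$ by $m^p$ times a constant multiple of $\E_{x,\e}|f(x+\e)-f(x)|^p$ --- and it is exactly here, in keeping the number of full-vector steps proportional to $m$ rather than to $mn$ so that the division by $m^p$ leaves a dimension-free constant, that the hypothesis $m\ge\sqrt{n/k}$ enters.

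The step I expect to be the main obstacle is the transference itself --- constructing the encoding of $\Z_{8m}^n$ into $\{-1,1\}^N$ so that $\T_{\!\n\setminus\S}$ becomes a conditional expectation, the half-period shift becomes a coordinate flip, all the scalings match, and the unavoidable wrap-around and boundary errors are genuinely controlled by the two terms on the right-hand side of~\eqref{eq:T version} rather than by a crude telescoping that would lose powers of $n$ or $k$. It is precisely the cancellation built into Theorem~\ref{thm:cube h} --- the sharp weights $k/n$ and $(k/n)^{p/2}$, in contrast with the weaker weights that the triangle inequality alone would give --- that makes the estimate go through, so any approach not exploiting it cannot yield~\eqref{eq:T version}. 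Finally, since only the qualitative dependence $\lesssim_p$ is claimed, no effort is needed to track the $p$-dependence of the constant coming from Theorem~\ref{thm:cube h}.
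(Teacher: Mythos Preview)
Your sketch has the right overall shape---reduce to Theorem~\ref{thm:cube h} via a cube function---but the execution diverges from the paper's and contains a real gap.

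The paper's route is much more direct than your global encoding of $\Z_{8m}^n$ into $\{-1,1\}^N$. It first telescopes the $4m$-step into $m$ steps of size~$4$ (this is where the factor $m^{-p}$ is spent), arriving at
\[
\frac{1}{\binom{n}{k}}\sum_{|\S|=k}\E_{x,\e}\bigl|\T_{\n\setminus\S}f(x+2\e_\S)-\T_{\n\setminus\S}f(x-2\e_\S)\bigr|^p.
\]
Then for each fixed $x$ it sets $h_x(\e)\eqdef f(x+2\e)-f(x-2\e)$ on $\{-1,1\}^n$---just $n$ cube coordinates, not a multiple of $n$---and checks the identity
\[
\T_{\n\setminus\S}f(x+2\e_\S)-\T_{\n\setminus\S}f(x-2\e_\S)=\EE_{\n\setminus\S}h_x(\e),
\]
which is immediate from the definitions: $\T_{\n\setminus\S}$ averages over $\pm 2$-shifts in the $\n\setminus\S$ coordinates, exactly what $\EE_{\n\setminus\S}$ does to $h_x$. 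Since $h_x$ is odd it has mean zero, so Theorem~\ref{thm:cube h} applies directly. Now $\partial_j h_x$ is a step-$4$ coordinate difference of $f$ (not step $O(m)$), and $h_x$ itself is a step-$4$ full-vector difference; both telescope to the right-hand side of~\eqref{eq:T version} with constants depending only on $p$. No encoding of all of $\Z_{8m}$, no wrap-around issues, no boundary terms.

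The concrete gap in your approach is the claim that the hypothesis $m\ge\sqrt{n/k}$ enters in bounding $\|h\|_p^p$. It does not: Lemma~\ref{lem:T version with scaling} carries no such hypothesis, and the paper's proof never uses it (that hypothesis is only invoked later, in the proof of Theorem~\ref{thm:main}, to absorb the separate error from Lemma~\ref{lem:T close to identity}). That you find yourself needing it signals that your transference is lossy---specifically, because you postpone the telescoping until after the cube inequality, your $\partial_j h$ and $h$ live at scale $m$ rather than at scale $O(1)$, and untangling them back to unit steps forces extraneous bookkeeping. Telescoping \emph{before} applying Theorem~\ref{thm:cube h}, as the paper does, removes the difficulty entirely.
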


\begin{proof} For every fixed $\S\subset \n$ we have
\begin{align}
\nonumber \bigg(\frac{1}{(16m)^n}&\sum_{\e\in \{-1,1\}^n}\sum_{x\in \Z_{8m}^n}\left|\T_{\!\n\setminus \S}f(x+4m\e_{\S})-\T_{\!\n\setminus \S}f(x)\right|^p\bigg)^{\frac{1}{p}}\\ \label{eq:before x to y}
&\le \sum_{k=1}^m \bigg(\frac{1}{(16m)^n}\sum_{\e\in \{-1,1\}^n}\sum_{x\in \Z_{8m}^n}\left|\T_{\!\n\setminus \S}f(x+4k\e_{\S})-\T_{\!\n\setminus \S}f(x+4(k-1)\e_{\S})\right|^p\bigg)^{\frac{1}{p}}\\
&=m\bigg(\frac{1}{(16m)^n}\sum_{\e\in \{-1,1\}^n}\sum_{y\in \Z_{8m}^n}\left|\T_{\!\n\setminus \S}f(y+2\e_{\S})-\T_{\!\n\setminus \S}f(y-2\e_{\S})\right|^p\bigg)^{\frac{1}{p}}, \label{eq:m triangle}
\end{align}
where for~\eqref{eq:m triangle} make the change of variable $y=x+2(2k+1)\e_S$  in each of the summands of~\eqref{eq:before x to y}.

For every $x\in \Z_{8m}^n$ define $h_x:\{-1,1\}^n\to \R$ by
\begin{equation}\label{eq:def hx}
\forall\, \e\in \{-1,1\}^n,\qquad h_x(\e)\eqdef f(x+2\e)-f(x-2\e).
\end{equation}
Recalling~\eqref{eq:def E operator} and~\eqref{eq:TS def}, observe that for every $(x,\e)\in \Z_{8m}^n\times \{-1,1\}^n$ and $\S\subset\n$ we have
$$
\T_{\!\n\setminus \S}f(x+2\e_{\S})-\T_{\!\n\setminus \S}f(x-2\e_{\S})=\EE_{\n\setminus \S}h_x(\e).
$$
It therefore follows from~\eqref{eq:m triangle} that
\begin{multline}\label{eq:use cube version hx}
\frac{1}{(16m)^n\binom{n}{k}}\sum_{\substack{\S\subset \n\\ |\S|=k}}\sum_{\e\in \{-1,1\}^n}\sum_{x\in \Z_{8m}^n}\frac{\left|\T_{\!\n\setminus \S}f(x+4m\e_{\S})-\T_{\!\n\setminus \S}f(x)\right|^p}{m^p}\\\le
\frac{1}{(8m)^n\binom{n}{k}}\sum_{\substack{\S\subset \n\\ |\S|=k}}\sum_{x\in \Z_{8m}^n}\left\|\EE_{\n\setminus \S}h_x\right\|_p^p\lesssim_p \sum_{j=1}^n \frac{k/n }{(8m)^n}\sum_{x\in \Z_{8m}^n}\|\partial_j h_x\|_{p}^p +\frac{(k/n)^{\frac{p}{2}} }{(8m)^n}\sum_{x\in \Z_{8m}^n}\|h_x\|_{p}^p,
\end{multline}
where in the last step of~\eqref{eq:use cube version hx} we applied Theorem~\ref{thm:cube h} with $h$ replaced by $h_x$, separately for each $x\in \Z_{8m}^n$, which we are allowed to do because   the function $h_x$ is odd, so $h_x\in L_p^0(\{-1,1\}^n)$.

Next, observe that for every $(x,\e)\in \Z_{8m}^n\times \{-1,1\}^n$ and $j\in \n$ we have
\begin{multline}\label{eq:to sum partial j hx}
|\partial_jh_x(\e)|^p\stackrel{\eqref{eq:def hx}}{=}|f(x+2\e)-f(x-2\e)-f(x+2\e-4\e_je_j)+f(x-2\e+4\e_je_j)|^p\\
\le 2^{p-1}|f(x+2\e)-f(x+2\e-4\e_je_j)|^p+2^{p-1}|f(x-2\e)-f(x-2\e+4\e_je_j)|^p.
\end{multline}
By summing~\eqref{eq:to sum partial j hx} over $(x,\e)\in \Z_{8m}^n\times \{-1,1\}^n$, we therefore see that
\begin{equation}\label{eq:4 der}
\forall\, j\in \n,\qquad \frac{1}{(8m)^n}\sum_{x\in \Z_{8m}^n}\|\partial_j h_x\|_{p}^p\le \frac{2^p }{(8m)^n}\sum_{y\in \Z_{8m}^n}|f(y+4e_j)-f(y)|^p.
\end{equation}
Since for every $y\in \Z_{8m}^n$ we have
$$
|f(y+4e_j)-f(y)|^p\le 4^{p-1}\sum_{k=1}^4|f(y+ke_j)-f(y+(k-1)e_j)|^p,
$$
it follows from~\eqref{eq:4 der} that
\begin{equation}\label{eq:done with gradient}
\frac{1}{(8m)^n}\sum_{j=1}^n\sum_{x\in \Z_{8m}^n}\|\partial_j h_x\|_{p}^p\le \frac{8^p}{(8m)^n}\sum_{j=1}^n\sum_{z\in \Z_{8m}^n}|f(z+e_j)-f(z)|^p.
\end{equation}
In the same vein to the above reasoning, for every $(x,\e)\in \Z_{8m}^n\times \{-1,1\}^n$ we have
$$
|h_x(\e)|^p\stackrel{\eqref{eq:def hx}}{\le} 4^{p-1}\sum_{k=-1}^2 |f(x+k\e)-f(x+(k-1)\e)|^p.
$$
Consequently,
\begin{equation}\label{eq:4 epsilon}
\frac{1 }{(8m)^n}\sum_{x\in \Z_{8m}^n}\|h_x\|_{p}^p\le \frac{4^p}{(16m)^n}\sum_{\e\in \{-1,1\}^n}\sum_{z\in \Z_{8m}^n} |f(z+\e)-f(z)|^p.
\end{equation}
The desired estimate~\eqref{eq:T version} now follows from a substitution of~\eqref{eq:done with gradient} and~\eqref{eq:4 epsilon} into~\eqref{eq:use cube version hx}.
\end{proof}

\begin{proof}[Proof of Theorem~\ref{thm:main}] Fixing $(x,\e)\in \Z_{8m}^n\times \{-1,1\}^n$ and $\S\subset \n$, observe that
\begin{align}\label{eq:move to T}
\nonumber|f(x+4m\e_{\S})-f(x)|^p&\le 3^{p-1}\Big(\left|\T_{\!\n\setminus \S}f(x+4m\e_{\S})-\T_{\!\n\setminus \S}f(x)\right|^p+\left|f(x)-\T_{\!\n\setminus \S}f(x)\right|^p\\&\qquad \qquad +|f(x+4m\e_{\S})-
 \T_{\!\n\setminus \S}f(x+4m\e_{\S})|^p\Big).
\end{align}
By averaging~\eqref{eq:move to T} over $(x,\e)\in \Z_{8m}^n\times \{-1,1\}^n$ and all those $\S\subset \n$ with $|\S|=k$, while using translation invariance in the variable $x$, we see that
\begin{align}
\nonumber \frac{1}{(16m)^n\binom{n}{k}}&\sum_{\substack{\S\subset \n\\ |\S|=k}}\sum_{\e\in \{-1,1\}^n}\sum_{x\in \Z_{8m}^n}\frac{\left|f(x+4m\e_{\S})-f(x)\right|^p}{m^p}\\\label{eq:T version to bound}&\lesssim_p \frac{1}{(16m)^n\binom{n}{k}}\sum_{\substack{\S\subset \n\\ |\S|=k}}\sum_{\e\in \{-1,1\}^n}\sum_{x\in \Z_{8m}^n}\frac{\left|\T_{\!\n\setminus \S}f(x+4m\e_{\S})-\T_{\!\n\setminus \S}f(x)\right|^p}{m^p}\\&\qquad+\frac{1}{m^p(8m)^n\binom{n}{k}} \sum_{\substack{\S\subset \n\\ |\S|=k}}\sum_{x\in \Z_{8m}^n} |f(x)-\T_{\!\n\setminus \S}f(x)|^p.\label{eq:perturbation to bound}
\end{align}
The quantity that appears in~\eqref{eq:T version to bound} can be bounded from above using Lemma~\ref{lem:T version with scaling}, and the quantity that appears in~\eqref{eq:perturbation to bound} can be bounded from above using Lemma~\ref{lem:T close to identity}. The resulting estimate is
\begin{multline*}\label{eq:what we want, with m}
\frac{1}{(16m)^n\binom{n}{k}}\sum_{\substack{\S\subset \n\\ |\S|=k}}\sum_{\e\in \{-1,1\}^n}\sum_{x\in \Z_{8m}^n}\frac{\left|f(x+4m\e_{\S})-f(x)\right|^p}{m^p}\\
\lesssim_p \frac{k/n}{(8m)^n}\sum_{j=1}^n\sum_{x\in \Z_{8m}^n}|f(x+e_j)-f(x)|^p+\frac{\left(\frac{k}{n}\right)^{\frac{p}{2}}+\frac{1}{m^p}}{(16m)^n}\sum_{\e\in \{-1,1\}^n}\sum_{x\in \Z_{8m}^n} |f(x+\e)-f(x)|^p.
\end{multline*}
This implies the desired estimate~\eqref{eq:desired main}, since we are assuming that $m\ge \sqrt{n/k}$.
\end{proof}

\section{Proof of Theorem~\ref{thm:cube h}}\label{sec:chaos}

Suppose that $n\in \N$ and $h:\{-1,1\}^n\to \R$. For every $k\in \{0,\ldots,n\}$ the $k$th Rademacher projection of $h$ is the function $\Rad_kh:\{-1,1\}^n\to \R$ that is given by
$$
\Rad_k h(\e)\eqdef \sum_{\substack{\A\subset \n\\ |A|=k}} \hat{h}(\A) W_\A(\e).
$$
We also have the common notation $\Rad_1h=\Rad h$. Note that $\Rad_0$ is the mean of $h$, i.e., recalling the notation~\eqref{eq:def E operator}, $\Rad_0h=\EE_{\n}h$. By a classical theorem of Bonami~\cite{Bon68}, if $\eta:\{-1,1\}^n\to \R$ is a Rademacher chaos of order at most $k$, i.e., $\hat{\eta}(\A)=0$ whenever $\A\subset \n$ is such that $|\A|>k$, then for every $p\in [2,\infty)$ we have $\|\eta\|_p\le (p-1)^{k/2}\|\eta\|_2\le p^{k/2}\|\eta\|_2$. Consequently,
$$
\|\Rad_k h\|_p\le p^{\frac{k}{2}}\|\Rad_k h\|_2\le p^{\frac{k}{2}}\|h\|_2\le p^{\frac{k}{2}}\|h\|_p,
$$
where we used the fact that (by Parseval's identity) $\|\Rad_k h\|_2\le \|h\|_2$, and that $\|h\|_2\le \|h\|_p$ since $p\ge 2$. This was a quick (and standard) derivation of the following well-known operator norm bound for $\Rad_k$, which we state here for ease of future reference.
\begin{equation}\label{eq:bonami}
\left\|\Rad_k\right\|_{p\to p}\le p^{\frac{k}{2}}.
\end{equation}

Given $\S\subset\n$ and $\alpha \in \R$, for every $h:\{-1,1\}^n\to \R$ define a function $\Delta_\S^{\!\alpha} h:\{-1,1\}^n\to \R$ by
$$
\forall\,\e\in \{-1,1\}^n,\qquad \Delta_\S^{\!\alpha} h(\e)\eqdef \sum_{\substack{\A\subset\n\\\A\cap \S\neq\emptyset}}|\A\cap \S|^\alpha \hat{h}(\A)W_\A(\e).
$$
Thus, recalling the notation~\eqref{eq:def partial j} for the hypercube partial derivatives $\partial_1,\ldots,\partial_n$, as well the notation~\eqref{eq:def Rj} for the hypercube Riesz transforms $\mathsf{R}_1,\ldots,\mathsf{R}_n$, we have the following standard identities.
$$
\forall\, j\in \n,\qquad \mathsf{R}_j=\frac12 \partial_j \Delta_{\n}^{\!-\frac12}.
$$
This means that Lust-Piquard's inequality~\eqref{eq:LP square function} can we rewritten as follows.
\begin{equation}\label{eq:move Delta}
\frac{1}{p^{3/2}} \left\|\Delta^{\! \frac12}_{\n}h\right\|_{p}\lesssim \bigg\|\Big(\sum_{j=1}^n (\partial_jh)^2\Big)^{\frac12}\bigg\|_{p}\lesssim p \left\|\Delta^{\! \frac12}_{\n}h\right\|_{p}.
\end{equation}

By Khinchine's inequality (with asymptotically sharp constant, see~\cite[Lem.~2]{PZ30}), we have
$$
\bigg\|\Big(\sum_{j=1}^n (\partial_jh)^2\Big)^{\frac12}\bigg\|_{p}\le \bigg(\frac{1}{2^n}\sum_{\d\in \{-1,1\}^n} \Big\|\sum_{j=1}^n\d_j \partial_j h\Big\|^p_{p}\bigg)^{\frac{1}{p}}\lesssim \sqrt{p} \bigg\|\Big(\sum_{j=1}^n (\partial_jh)^2\Big)^{\frac12}\bigg\|_{p}.
$$
In combination with~\eqref{eq:move Delta}, this implies that
\begin{equation}\label{eq:LP-Riesz}
\frac{1}{p^{\frac32}}\left\|\Delta^{\! \frac12}_{\n}h\right\|_{p}\lesssim  \bigg(\frac{1}{2^n}\sum_{\d\in \{-1,1\}^n} \Big\|\sum_{j=1}^n\d_j \partial_j h\Big\|^p_{p}\bigg)^{\frac{1}{p}}\lesssim p^{\frac32}\left\|\Delta^{\! \frac12}_{\n}h\right\|_{p}.
\end{equation}
For ease of future reference, we also record here the following formal consequence of~\eqref{eq:LP-Riesz}, which holds true for every $\S\subset \n$ by an application of~\eqref{eq:LP-Riesz} to the restriction of $h$ to the coordinates in $\S$.

\begin{equation}\label{eq:LP-Riesz-S}
\frac{1}{p^{\frac32}}\left\|\Delta_\S^{\! \frac12}h\right\|_{p}\lesssim  \bigg(\frac{1}{2^n}\sum_{\d\in \{-1,1\}^n} \Big\|\sum_{j\in \S}\d_j \partial_j h\Big\|^p_{p}\bigg)^{\frac{1}{p}}\lesssim p^{\frac32}\left\|\Delta_\S^{\! \frac12}h\right\|_{p}.
\end{equation}

Lemma~\ref{lem:inverse laplacian} below contains bounds on negative powers of the hypercube Laplacian $\Delta_{\n}$ that will be used later, but are more general and precise than what we actually need for the proof of Theorem~\ref{thm:cube h}: we will only use the following operator norm estimate corresponding to the case $\alpha=1/2$ of Lemma~\ref{lem:inverse laplacian}, and a worse dependence on $p$ would have sufficed for our purposes as well.
\begin{equation}\label{eq:1/2}
\forall\, p\in [2,\infty),\qquad \sup_{n\in \N}\left\|\Delta_{\n}^{\!-\frac12} \right\|_{p\to p}\lesssim \sqrt{\log p}.
\end{equation}
We include here the sharp estimates of Lemma~\ref{lem:inverse laplacian}  because they are interesting in their own right and our proof yields them without additional effort. The boundedness of negative powers of the hypercube Laplacian were studied in~\cite[Section~3]{NS02} in the context of vector valued mappings. By specializing the bounds that are stated in~\cite{NS02} to the case of real valued mappings one obtains a variant of~\eqref{eq:1/2}, but with a much worse dependence on $p$ (the resulting bound grows exponentially with $p$). The (simple) proof below of Lemma~\ref{lem:inverse laplacian} follows the strategy of~\cite{NS02} while  using additional favorable properties of real valued mappings and taking care to obtain asymptotically sharp bounds.

\begin{lemma}\label{lem:inverse laplacian} Suppose that $p\in [2,\infty)$ and $\alpha\in (0,\infty)$ satisfy
\begin{equation}\label{eq:alpha upper}
\alpha\le \frac{5+\log p}{4}.
\end{equation}
Then
\begin{equation}\label{eq:norms of alpha powers}
\sup_{n\in \N}\left\|\Delta^{\!-\alpha}_{\n} \right\|_{p\to p}\asymp\frac{(\log p)^\alpha}{2^\alpha\Gamma(1+\alpha)}.
\end{equation}
\end{lemma}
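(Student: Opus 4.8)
The strategy is to use the standard subordination formula expressing a negative power of an operator as an average of the associated semigroup. Writing $\Delta_{\n}=\sum_{\A\subset\n}|\A|W_\A\otimes\cdot$ (acting on $L_p^0$, say, so that $\Delta_\n$ is invertible), the identity $\lambda^{-\alpha}=\frac{1}{\Gamma(\alpha)}\int_0^\infty t^{\alpha-1}e^{-\lambda t}\,\dd t$ applied spectrally gives
\[
\Delta_{\n}^{\!-\alpha}=\frac{1}{\Gamma(\alpha)}\int_0^\infty t^{\alpha-1}e^{-t\Delta_{\n}}\,\dd t,
\]
where $e^{-t\Delta_\n}$ is the heat (Bonami--Beckner) semigroup, which multiplies $W_\A$ by $e^{-t|\A|}$. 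Since $e^{-t\Delta_\n}$ coincides with the noise operator $T_{e^{-t}}$, Bonami's hypercontractivity theorem tells us that $\|e^{-t\Delta_\n}\|_{p\to p}\le 1$ for all $t\ge 0$, and in fact $\|T_\rho\|_{p\to p}=1$ once $\rho\le 1/\sqrt{p-1}$, i.e. once $t\ge\frac12\log(p-1)$. For small $t$ we just use the contractive bound $\|e^{-t\Delta_\n}\|_{p\to p}\le 1$ (indeed $e^{-t\Delta_\n}$ is an average of measure-preserving maps, hence an $L_p$ contraction). Splitting the integral at $t_0\eqdef\frac12\log(p-1)$ and bounding $\|e^{-t\Delta_\n}\|_{p\to p}$ by $1$ on $[0,t_0]$, we get, for the upper bound,
\[
\big\|\Delta_{\n}^{\!-\alpha}\big\|_{p\to p}\le \frac{1}{\Gamma(\alpha)}\int_0^{t_0}t^{\alpha-1}\,\dd t+\frac{1}{\Gamma(\alpha)}\int_{t_0}^\infty t^{\alpha-1}\big\|T_{e^{-t}}\big\|_{p\to p}\,\dd t.
\]
The first term is exactly $t_0^\alpha/\big(\alpha\Gamma(\alpha)\big)=t_0^\alpha/\Gamma(1+\alpha)$, which for $p$ bounded away from $1$ is $\asymp(\log p)^\alpha/\big(2^\alpha\Gamma(1+\alpha)\big)$, the claimed quantity. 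For the tail one uses hypercontractivity more quantitatively: for $t>t_0$ one has $e^{-t}<1/\sqrt{p-1}$, and a sharp form of the hypercontractive bound (e.g. from Bonami's inequality, or from the two-point inequality iterated) gives $\|T_\rho\|_{p\to p}\lesssim 1$ with exponentially decaying corrections, so that $\int_{t_0}^\infty t^{\alpha-1}\|T_{e^{-t}}\|_{p\to p}\,\dd t$ is dominated by the same order $t_0^\alpha/\Gamma(1+\alpha)$ once one invokes the hypothesis~\eqref{eq:alpha upper} that $\alpha\lesssim\log p$, which is exactly what makes $\alpha/t_0$ bounded and hence makes $\int_{t_0}^\infty t^{\alpha-1}e^{-c(t-t_0)}\,\dd t\lesssim t_0^{\alpha-1}$ negligible relative to $t_0^\alpha/\alpha$. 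Dividing by $\Gamma(\alpha)$ and using $\alpha\Gamma(\alpha)=\Gamma(1+\alpha)$ gives the stated upper bound.

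For the matching lower bound, I would test $\Delta_{\n}^{\!-\alpha}$ on a single Walsh function, or rather on a low-degree chaos, to see that the operator norm is at least a constant multiple of $(\log p)^\alpha/\big(2^\alpha\Gamma(1+\alpha)\big)$. Concretely: restricting to the first level, $\Delta_\n^{-\alpha}$ acts on $\Rad_1 h$ as multiplication by $1$, which only gives the trivial bound $1$; to see the $(\log p)^\alpha$ growth one must use a chaos of degree $\asymp\log p$. Take $h$ to be (a normalization of) a function supported on levels near $d\asymp\log p$; then $\Delta_\n^{\!-\alpha}h$ multiplies level-$j$ coefficients by $j^{-\alpha}\asymp d^{-\alpha}$, while the crucial point is that a degree-$d$ chaos with all mass on level $d$ can have $\|h\|_p\asymp(p/d)^{d/2}\|h\|_2$ or similar, and the ratio $\|\Delta_\n^{-\alpha}h\|_p/\|h\|_p$ can be made $\asymp d^{-\alpha}\cdot(\text{something})$... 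Actually the cleanest route mirrors~\cite{NS02}: choose $h=\Delta_\n^{\alpha}g$ for a cleverly chosen $g$ (for instance $g$ the indicator-like function realizing the hypercontractivity extremizer, essentially a biased product function), and verify directly that $\|g\|_p\gtrsim\frac{(\log p)^\alpha}{2^\alpha\Gamma(1+\alpha)}\|\Delta_\n^{\alpha}g\|_p$; here the extremal example for the $p\to p$ norm of $T_\rho$ at the critical $\rho=1/\sqrt{p-1}$ feeds in, and the Gamma-function constant emerges from integrating $t^{\alpha-1}$ against the heat kernel evaluated at this extremizer. I would spell this out by passing to the limit $n\to\infty$ and working with a single Ornstein--Uhlenbeck-type variable, where the computation of $\|\cdot\|_p$ of the relevant function is an explicit one-dimensional integral.

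The main obstacle is the lower bound: producing an explicit near-extremal function whose $L_p$ norm genuinely sees the factor $(\log p)^\alpha/\big(2^\alpha\Gamma(1+\alpha)\big)$ rather than just $(\log p)^\alpha$ up to a worse constant. This is where the precise shape of the hypercontractivity extremizers (biased product functions $\prod_i(1+c\e_i)$ with $c$ tuned so that $\rho=1/\sqrt{p-1}$ is critical) and the exact asymptotics of $\Gamma$ must be combined; the subordination integral $\int_0^\infty t^{\alpha-1}e^{-t\Delta_\n}\,\dd t$ applied to such an extremizer localizes near $t\asymp\frac12\log p$, and one must show no cancellation is lost. The upper bound, by contrast, is a fairly routine split-the-integral argument once one has the sharp hypercontractive decay $\|T_{e^{-t}}\|_{p\to p}=1$ for $t\ge\frac12\log(p-1)$ together with exponential decay of the operator norm past that threshold, and the role of hypothesis~\eqref{eq:alpha upper} is precisely to guarantee that the tail $t>\frac12\log p$ contributes only lower-order terms. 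The special case $\alpha=1/2$ needed elsewhere, namely~\eqref{eq:1/2}, is then immediate from $\Gamma(3/2)=\sqrt\pi/2$.
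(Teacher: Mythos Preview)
Your upper bound strategy is essentially the paper's: the subordination identity
\[
\Delta_{\n}^{\!-\alpha}=\frac{1}{\Gamma(\alpha)}\int_0^\infty s^{\alpha-1}e^{-s\Delta_{\n}}(I-\Rad_0)\,\dd s,
\]
a split at $M\asymp\frac12\log p$, and the contraction bound $\|e^{-s\Delta_{\n}}\|_{p\to p}\le1$ on $[0,M]$ produce the main term $M^\alpha/\Gamma(1+\alpha)$. For the tail the paper does not invoke hypercontractivity; it uses the cruder level-by-level estimate
\[
\left\|e^{-s\Delta_{\n}}(I-\Rad_0)\right\|_{p\to p}\le\sum_{k\ge1}e^{-sk}\|\Rad_k\|_{p\to p}\stackrel{\eqref{eq:bonami}}{\le}\sum_{k\ge1}\big(\sqrt p\,e^{-s}\big)^k\lesssim\frac{\sqrt p}{e^s}
\]
for $e^s>\sqrt{ep}$, which gives the needed exponential decay. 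Your alternative mechanism (factor $e^{-t\Delta_\n}=e^{-t_0\Delta_\n}e^{-(t-t_0)\Delta_\n}$, use hypercontractivity $\|e^{-t_0\Delta_\n}\|_{2\to p}\le1$ and the $L_2^0$ spectral gap $\|e^{-(t-t_0)\Delta_\n}\|_{L_2^0\to L_2^0}\le e^{-(t-t_0)}$) would also yield $\|e^{-t\Delta_\n}\|_{L_p^0\to L_p^0}\le e^{-(t-t_0)}$ and hence the same tail bound, once you actually write it down; your phrase ``$\|T_\rho\|_{p\to p}\lesssim 1$ with exponentially decaying corrections'' does not yet say this. In either argument the hypothesis~\eqref{eq:alpha upper} enters exactly where you locate it, to make $\int_M^\infty s^{\alpha-1}e^{-(s-M)}\,\dd s$ negligible relative to $M^\alpha/\alpha$.

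The genuine gap is the lower bound. Your successive attempts (a single Walsh function, chaos of degree $\asymp\log p$, hypercontractivity extremizers, an Ornstein--Uhlenbeck limit) never land on a concrete test function, and the proposal itself flags this as the main obstacle. The paper's argument is short, explicit, and conceptually different from everything you sketch. One passes by duality to $L_{p^*}$ with $p^*=p/(p-1)$, takes $n=\lceil p\rceil$, and tests on the normalized point mass
\[
f_p^n=2^{n/p^*}\d_{(1,\ldots,1)}=2^{-n/p}\prod_{j=1}^n(1+\e_j),\qquad \|f_p^n\|_{p^*}=1.
\]
Because $f_p^n$ is a product, $e^{-s\Delta_{\n}}(I-\Rad_0)f_p^n(\e)=2^{-n/p}\big(\prod_j(1+e^{-s}\e_j)-1\big)$ in closed form; on the half of the cube where $\kappa(\e)\le n/2$ this equals $-2^{-n/p}\big(1-(1+e^{-s})^{\kappa(\e)}(1-e^{-s})^{n-\kappa(\e)}\big)\le -2^{-n/p}\big(1-(1-e^{-2s})^{n/2}\big)$, hence has constant sign in $s$. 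Integrating $s^{\alpha-1}$ against this over $s\in[0,\tfrac12\log n]$ already yields $|\Delta_{\n}^{\!-\alpha}f_p^n(\e)|\gtrsim 2^{-n/p}(\log n)^\alpha/\big(2^\alpha\Gamma(1+\alpha)\big)$ on that half of the cube, and with $n=\lceil p\rceil$ the prefactor $2^{-n/p}$ is harmless. The two ingredients you were missing are the duality step (test in $L_{p^*}$, not $L_p$) and the product-form test function, which is precisely what makes the heat flow computable and sign-definite so that no cancellation occurs in the subordination integral.
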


\begin{remark} Some restriction on $\alpha$ in the spirit of~\eqref{eq:alpha upper} is needed for~\eqref{eq:norms of alpha powers} to hold true, since $\lim_{\alpha\to \infty} \Delta_{\n}^{-\alpha}=\Rad$ and it is known  that $\|\Rad\|_{p\to p}\gtrsim \sqrt{p}$ for $n$ large enough (as a function of $p$).

\end{remark}

\begin{proof}[Proof of Lemma~\ref{lem:inverse laplacian}] The lower estimate
\begin{equation}\label{eq:lower for every alpha}
\sup_{n\in \N}\left\|\Delta^{\!-\alpha}_{\n} \right\|_{p\to p}\gtrsim \frac{(\log p)^\alpha}{2^\alpha\Gamma(1+\alpha)}.
\end{equation}
holds true for every $\alpha\in (0,\infty)$, without the restriction~\eqref{eq:alpha upper}.  Indeed, denote $p^*\eqdef p/(p-1)$ and observe that since $\Delta_{\n}^{\!-\alpha}$ is self-adjoint it follows by duality that~\eqref{eq:lower for every alpha} is equivalent to the estimate
\begin{equation}\label{eq:lower for every alpha q}
\sup_{n\in \N}\left\|\Delta^{\!-\alpha}_{\n} \right\|_{p^*\to p^*}\gtrsim \frac{(\log p)^\alpha}{2^\alpha\Gamma(1+\alpha)}.
\end{equation}

Fix an integer  $n\ge 2$ and consider the following $f_p^n\in L_{p^*}(\{-1,1\}^n)$, for which  $\|f_p^n\|_{p^*}=1$.
$$
\forall\, \e\in \{-1,1\}^n,\qquad f_p^n(\e)\eqdef 2^{\frac{n}{p^*}}\d_{(1,\ldots,1)}(\e)=2^{-\frac{n}{p}}\prod_{j=1}^n(1+\e_j)=2^{-\frac{n}{p}}\sum_{\A\subset \n}W_{\A}.
$$

For every $u\in (0,\infty)$ and $\alpha \in (0,\infty)$ the following identity holds true.
$$
\frac{1}{u^{\alpha}} =\frac{1}{\Gamma(\alpha)}\int_0^\infty s^{\alpha-1} e^{-su}ds.
$$
Consequently,
\begin{equation}\label{eq:laplacian identity}
\Delta^{\!-\alpha}_{\n}=\frac{1}{\Gamma(\alpha)}\int_0^\infty s^{\alpha-1} e^{-s\Delta_{\n}}(I-\Rad_0)\dd s .
\end{equation}
Note that for every $s\in (0,\infty)$ and $\e\in \{-1,1\}^n$ we have
\begin{multline}\label{eq:kappa appears}
 e^{-s\Delta_{\n}}(I-\Rad_0)f_p^n(\e)=2^{-\frac{n}{p}}\sum_{\substack{\A\subset \n\\A\neq\emptyset}}e^{-s|\A|}W_\A\\=2^{-\frac{n}{p}}\Big(\prod_{j=1}^n(1+e^{-s}\e_j)-1\Big)=-2^{-\frac{n}{p}}\left(1-(1+e^{-s})^{\kappa(\e)}(1-e^{-s})^{n-\kappa(\e)}\right),
\end{multline}
where we use the notation
$$
\forall\, \e\in \{-1,1\}^n,\qquad \kappa(\e)\eqdef \big|\{j\in \n: \e_j=1\}\big|.
$$
Since the function $k\mapsto (1+e^{-s})^{k}(1-e^{-s})^{n-k}$ is increasing on $\{0,\ldots,n\}$, it follows from~\eqref{eq:kappa appears} that
$$
\forall\, \e\in \{-1,1\}^n,\qquad \kappa(\e)\le \frac{n}{2}\implies  2^{\frac{n}{p}}e^{-s\Delta_{\n}}(I-\Rad_0)f_p^n(\e)\le -\left(1-(1-e^{-2s})^{\frac{n}{2}}\right).
$$
Recalling~\eqref{eq:laplacian identity}, it therefore follows that if $\e\in \{-1,1\}^n$ satisfies $\kappa(\e)\le n/2$ then
$$
2^{\frac{n}{p}}\left|\Delta^{\!-\alpha}_{\n}f_p^n(\e)\right|\ge \frac{1}{\Gamma(\alpha)}\int_0^\infty s^{\alpha-1}\left(1-(1-e^{-2s})^{\frac{n}{2}}\right)\dd s \ge \frac{1}{\Gamma(\alpha)}\int_0^{\frac{\log n}{2}}\frac{s^{\alpha-1}}{2}\dd s= \frac{(\log n)^\alpha}{2^{1+\alpha}\Gamma(1+\alpha)}.
$$
Hence, since $\left|\left\{\e\in \{-1,1\}^n:\ \kappa(\e)\le \frac{n}{2}\right\}\right|\ge 2^{n-1}$, we have
\begin{equation}\label{eq:lower q fpn}
\left\|\Delta^{\!-\alpha}_{\n}f_p^n\right\|_{p^*}\gtrsim \frac{2^{-\frac{n}{p}}(\log n)^\alpha}{2^{\alpha}\Gamma(1+\alpha)}.
\end{equation}
The desired estimate~\eqref{eq:lower for every alpha q} now follows by choosing $n=\lceil p\rceil$ in~\eqref{eq:lower q fpn}.

Having proven~\eqref{eq:lower for every alpha}, it remains to show that under the assumption~\eqref{eq:alpha upper} we have
\begin{equation}\label{eq:upper for  alpha not too big}
\sup_{n\in \N}\left\|\Delta^{\!-\alpha}_{\n} \right\|_{p\to p}\lesssim \frac{(\log p)^\alpha}{2^\alpha\Gamma(1+\alpha)}.
\end{equation}
To this end, observe first that the identity
$$
e^{-s\Delta_{\n}}(I-\Rad_0)= \sum_{k=1}^n \frac{1}{e^{sk}}\Rad_k
$$
implies that
\begin{equation*}
\left\|e^{-s\Delta_{\n}}(I-\Rad_0)\right\|_{p\to p} \le \sum_{k=1}^n \frac{ 1}{e^{sk}}\left\|\Rad_k\right\|_{p\to p} \stackrel{\eqref{eq:bonami}}{\le} \sum_{k=1}^n \left(\frac{\sqrt{p}}{e^s}\right)^{k}.
\end{equation*}
Hence,
\begin{equation}\label{eq:s large laplacian}
e^s> \sqrt{p}\implies \left\|e^{-s\Delta_{\n}}(I-\Rad_0)\right\|_{p\to p}\lesssim \frac{\sqrt{p}}{e^s-\sqrt{p}}.
\end{equation}
Suppose that $M\in (0,\infty)$  satisfies
\begin{equation}\label{eq:M choice}
e^M\ge \sqrt{ep}\iff M\ge \frac{1+\log p}{2}.
\end{equation}
Then, by~\eqref{eq:s large laplacian} we have
\begin{equation}\label{eq:from M}
\frac{1}{\Gamma(\alpha)} \int_M^\infty s^{\alpha-1} \left\|e^{-s\Delta_{\n}}(I-\Rad_0)\right\|_{p\to p}\dd s \lesssim \frac{\sqrt{p}}{\Gamma(\alpha)} \int_M^\infty \frac{s^{\alpha-1}}{e^s} \dd s.
\end{equation}
Due to~\eqref{eq:alpha upper} and~\eqref{eq:M choice} we have $M\ge 2(\alpha-1)$.  Since the function  $s\mapsto s^{\alpha-1}e^{-s/2}$ is decreasing on $[2(\alpha-1),\infty)\supseteq [M,\infty)$, it follows that for every $s\ge M$ we have $s^{\alpha-1}e^{-s}\le M^{\alpha-1}e^{-M/2}e^{-s/2}$. So,
\begin{equation}\label{eq:gamma tail}
\int_M^\infty \frac{s^{\alpha-1}}{e^s} \dd s\le \frac{M^{\alpha-1}}{e^{M/2}} \int_M^\infty \frac{\dd s}{e^{s/2}}\lesssim \frac{M^{\alpha-1}}{e^M}.
\end{equation}
A substitution of~\eqref{eq:gamma tail} into~\eqref{eq:from M} yields the estimate
\begin{equation}\label{eq:from M final}
\frac{1}{\Gamma(\alpha)} \int_M^\infty s^{\alpha-1} \left\|e^{-s\Delta_{\n}}(I-\Rad_0)\right\|_{p\to p}\dd s \lesssim \frac{M^{\alpha-1}\sqrt{p}}{e^M\Gamma(\alpha)}.
\end{equation}
At the same time, since for every $s\in [0,\infty)$ we have $\|e^{-s\Delta_{\n}}\|_{p\to p}\le 1$ (because $e^{-s\Delta_{\n}}$ is an averaging operator) and  $\|I-\Rad_0\|_{p\to p}\le 2$, we have
\begin{equation}\label{eq:just contraction}
\frac{1}{\Gamma(\alpha)} \int_0^M s^{\alpha-1} \left\|e^{-s\Delta_{\n}}(I-\Rad_0)\right\|_{p\to p}\dd s\lesssim \frac{1}{\Gamma(\alpha)} \int_0^M s^{\alpha-1} \dd s =\frac{M^\alpha}{\Gamma(1+\alpha)}.
\end{equation}

Making the choice
\begin{equation}\label{eq:M choice final}
M\eqdef \frac{1+\log p}{2},
\end{equation}
we see that
\begin{multline*}
\left\|\Delta^{\!-\alpha}_{\n} \right\|_{p\to p} \stackrel{\eqref{eq:laplacian identity}}{\le} \frac{1}{\Gamma(\alpha)} \int_0^\infty s^{\alpha-1} \left\|e^{-s\Delta_{\n}}(I-\Rad_0)\right\|_{p\to p}\dd s
\stackrel{\eqref{eq:from M final}\wedge \eqref{eq:just contraction}}{\lesssim} \frac{M^\alpha}{\Gamma(1+\alpha)}+ \frac{M^{\alpha-1}\sqrt{p}}{e^M\Gamma(\alpha)}\\\stackrel{\eqref{eq:M choice final}}{=}\frac{(1+\log p)^\alpha}{2^\alpha\Gamma(1+\alpha)}+ \frac{(1+\log p)^{\alpha-1}}{2^{\alpha-1}\Gamma(\alpha)\sqrt{e}}\asymp \frac{(\log p)^\alpha}{2^\alpha\Gamma(\alpha)}\left(\frac{1}{\alpha}+\frac{1}{\log p}\right)\left(1+\frac{1}{\log p}\right)^\alpha\stackrel{\eqref{eq:alpha upper}}{\asymp}
\frac{(\log p)^\alpha}{2^\alpha\Gamma(1+\alpha)}.
\end{multline*}
This is precisely the desired estimate~\eqref{eq:upper for  alpha not too big}, thus completing the proof of Lemma~\ref{lem:inverse laplacian}.
\end{proof}

\begin{lemma}\label{lem:use jensen}
Fix $n\in \N$, $p\in [1,\infty)$ and $\alpha\in \R$. Then for $h\in L_p^0(\{-1,1\}^n)$ and $S\subset \n$ we have,
$$
\left\|\mathsf{E}_{\n\setminus \S} h \right\|_{p}\le \left\|\Delta_{\S}^{\!\alpha}\Delta_{\n}^{\!-\alpha}h\right\|_p.
$$
\end{lemma}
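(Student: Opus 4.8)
The plan is to deduce the inequality from the elementary fact that the averaging operator $\EE_{\n\setminus\S}$ is a contraction on $L_p(\{-1,1\}^n)$ (this is the Jensen input alluded to in the name of the lemma), combined with a one-line Fourier-analytic identity that makes $\Delta_\S^{\!\alpha}\Delta_\n^{\!-\alpha}$ invisible after the averaging.

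First I would record the Fourier--Walsh descriptions of the operators in play. Since $\EE_{\n\setminus\S}$ averages over the coordinates in $\n\setminus\S$, for every $g:\{-1,1\}^n\to\R$ it retains exactly the frequencies supported inside $\S$, i.e.\ $\EE_{\n\setminus\S}g=\sum_{\A\subset\S}\hat{g}(\A)W_\A$. Next, because $h\in L_p^0(\{-1,1\}^n)$ we have $\hat{h}(\emptyset)=0$, so
\[
\Delta_\n^{\!-\alpha}h=\sum_{\substack{\A\subset\n\\ \A\neq\emptyset}}\frac{\hat{h}(\A)}{|\A|^{\alpha}}W_\A ,\qquad
\Delta_\S^{\!\alpha}\Delta_\n^{\!-\alpha}h=\sum_{\substack{\A\subset\n\\ \A\cap\S\neq\emptyset}}\left(\frac{|\A\cap\S|}{|\A|}\right)^{\!\alpha}\hat{h}(\A)\,W_\A .
\]

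The key observation is then immediate: if $\emptyset\neq\A\subset\S$ then $\A\cap\S=\A$, so the multiplier $(|\A\cap\S|/|\A|)^{\alpha}$ equals $1$ on precisely the frequencies that survive $\EE_{\n\setminus\S}$. Hence, writing $g\eqdef\Delta_\S^{\!\alpha}\Delta_\n^{\!-\alpha}h$, I would conclude
\[
\EE_{\n\setminus\S}\big(\Delta_\S^{\!\alpha}\Delta_\n^{\!-\alpha}h\big)=\sum_{\substack{\A\subset\S\\ \A\neq\emptyset}}\hat{h}(\A)W_\A=\EE_{\n\setminus\S}h,
\]
where the last equality again uses $\hat{h}(\emptyset)=0$. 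Finally, $\EE_{\n\setminus\S}g(\e)=2^{-n}\sum_{\d\in\{-1,1\}^n}g(\d_{\n\setminus\S}+\e_\S)$ is a convex average of values of $g$, so by convexity of $t\mapsto|t|^p$ (Jensen's inequality) one has $\|\EE_{\n\setminus\S}g\|_p\le\|g\|_p$ for every $g\in L_p(\{-1,1\}^n)$. Applying this to $g=\Delta_\S^{\!\alpha}\Delta_\n^{\!-\alpha}h$ and using the displayed identity gives $\|\EE_{\n\setminus\S}h\|_p=\|\EE_{\n\setminus\S}g\|_p\le\|\Delta_\S^{\!\alpha}\Delta_\n^{\!-\alpha}h\|_p$, which is the assertion.

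There is essentially no obstacle here; the argument is a short bookkeeping exercise. The only point that requires a moment's care is the treatment of the empty-frequency term, which is exactly why the hypothesis $h\in L_p^0(\{-1,1\}^n)$ is imposed — it guarantees both that $\Delta_\n^{\!-\alpha}h$ is well defined and that no constant term is lost when comparing $\EE_{\n\setminus\S}h$ with $\EE_{\n\setminus\S}g$.
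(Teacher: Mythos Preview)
Your proposal is correct and follows essentially the same approach as the paper: both arguments verify the Fourier--Walsh identity $\EE_{\n\setminus\S}\big(\Delta_\S^{\!\alpha}\Delta_\n^{\!-\alpha}h\big)=\EE_{\n\setminus\S}h$ on $L_p^0(\{-1,1\}^n)$ (noting that the multiplier $(|\A\cap\S|/|\A|)^\alpha$ equals $1$ on the surviving frequencies $\emptyset\neq\A\subset\S$), and then conclude by the contractivity of the conditional expectation $\EE_{\n\setminus\S}$ on $L_p$, i.e.\ Jensen's inequality. Your remark on the role of the mean-zero hypothesis matches the paper's use as well.
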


\begin{proof} Observe that we have the following identity of operators on $L_p^0(\{-1,1\}^n)$.
\begin{equation}\label{eq:crucial laplacian identity}
\EE_{\n\setminus \S}\Delta_{\S}^{\!\alpha}\Delta_{\n}^{\!-\alpha}=\EE_{\n\setminus \S}.
\end{equation}
Indeed, if $\emptyset \neq \A\subset \n$ then $\EE_{\n\setminus \S}W_A=\1_{\{\A\subset \S\}} W_A$, and at the same time we have
$$
\EE_{\n\setminus \S}\Delta_{\S}^{\!\alpha}\Delta_{\n}^{\!-\alpha}W_{\A}=\frac{|\A\cap \S|^\alpha}{|\A|^\alpha} \EE_{\n\setminus \S}W_A=\frac{|\A\cap \S|^\alpha}{|\A|^\alpha} \1_{\{\A\subset \S\}} W_A=\1_{\{\A\subset \S\}} W_A.
$$
Consequently,
\begin{equation}\label{eq:use jensen}
\left\|\Delta_{\S}^{\!\alpha}\Delta_{\n}^{\!-\alpha}h\right\|_p^p=\EE_{\S}\EE_{\n\setminus \S} \left|\Delta_{\S}^{\!\alpha}\Delta_{\n}^{\!-\alpha}h\right|^p\ge \EE_{\S} \left|\EE_{\n\setminus \S}\Delta_{\S}^{\!\alpha}\Delta_{\n}^{\!-\alpha}h\right|^p\stackrel{\eqref{eq:crucial laplacian identity}}{=} \EE_{\S} \left|\EE_{\n\setminus \S}h\right|^p=\left\|\mathsf{E}_{\n\setminus \S} h \right\|_{p}^p,
\end{equation}
where the inequality in~\eqref{eq:use jensen} follows from Jensen's inequality ($\mathsf{E}_{\n\setminus \S}$ is an averaging operator).
\end{proof}

\begin{proof}[Proof of Theorem~\ref{thm:cube h}]
By Lemma~\ref{lem:use jensen} we have
\begin{equation}\label{eq:use norm 1 on subsets}
\bigg(\frac{1}{\binom{n}{k}} \sum_{\substack{\S\subset \n\\ |\S|=k}}\left\|\mathsf{E}_{\n\setminus \S} h \right\|_{p}^p\bigg)^{\frac{1}{p}} \le \bigg(\frac{1}{\binom{n}{k}} \sum_{\substack{\S\subset \n\\ |\S|=k}}\Big\|\Delta_{\S}^{\!\frac12}\Delta_{\n}^{\!-\frac12}h\Big\|_p^p\bigg)^{\frac{1}{p}}.
\end{equation}
By Lust-Piquard's discrete Riesz  transform inequality~\eqref{eq:LP-Riesz-S}, for every fixed $\S\subset \n$ we have
\begin{equation}\label{eq:use discrete riesz on S}
\Big\|\Delta_{\S}^{\!\frac12}\Delta_{\n}^{\!-\frac12}h\Big\|_p\lesssim p^{\frac32} \bigg(\frac{1}{2^n}\sum_{\d\in \{-1,1\}^n} \Big\|\sum_{j\in \S}\d_j \partial_j \Delta_{\n}^{\!-\frac12} h\Big\|^p_{p}\bigg)^{\frac{1}{p}}.
\end{equation}
A substitution of~\eqref{eq:use discrete riesz on S} into~\eqref{eq:use norm 1 on subsets} yields
\begin{equation}\label{eq:get rademacher upper}
\bigg(\frac{1}{\binom{n}{k}} \sum_{\substack{\S\subset \n\\ |\S|=k}}\left\|\mathsf{E}_{\n\setminus \S} h \right\|_{p}^p\bigg)^{\frac{1}{p}} \lesssim p^{\frac32}
\bigg(\frac{1}{2^n\binom{n}{k}}\sum_{\substack{\S\subset \n\\ |\S|=k}}\sum_{\d\in \{-1,1\}^n} \Big\|\sum_{j\in \S}\d_j \partial_j \Delta_{\n}^{\!-\frac12} h\Big\|^p_{p}\bigg)^{\frac{1}{p}}.
\end{equation}
For fixed $\e\in \{-1,1\}^n$,  the linear $X_p$ inequality~\eqref{eq:linearXp} with $\left\{a_j=\partial_j \Delta_{\n}^{\!-\frac12} h(\e)\right\}_{j=1}^n$ yields the estimate
\begin{multline}\label{eq:use linear Xp}
\bigg(\frac{1}{2^n\binom{n}{k}}\sum_{\substack{\S\subset \n\\ |\S|=k}}\sum_{\d\in \{-1,1\}^n} \Big|\sum_{j\in \S}\d_j \partial_j \Delta_{\n}^{\!-\frac12} h(\e)\Big|^p\bigg)^{\frac{1}{p}}\\
\lesssim \frac{p}{\log p}\bigg(\frac{k}{n}\sum_{j=1}^n \Big|\partial_j\Delta_{\n}^{\!-\frac12}  h(\e)\Big|^p+
\frac{(k/n)^{\frac{p}{2}}}{2^n}\sum_{\d\in \{-1,1\}^n} \Big|\sum_{j=1}^n\d_j \partial_j \Delta_{\n}^{\!-\frac12} h(\e)\Big|^p\bigg)^{\frac{1}{p}}.
\end{multline}
By taking $L_p$ norms with respect to $\e\in \{-1,1\}^n$, it follows from~\eqref{eq:use linear Xp} that
\begin{multline}\label{eq:use linear Xp-norms}
\bigg(\frac{1}{2^n\binom{n}{k}}\sum_{\substack{\S\subset \n\\ |\S|=k}}\sum_{\d\in \{-1,1\}^n} \Big\|\sum_{j\in \S}\d_j \partial_j \Delta_{\n}^{\!-\frac12} h\Big\|_p^p\bigg)^{\frac{1}{p}}\\
\lesssim \frac{p}{\log p}\bigg(\frac{k}{n}\sum_{j=1}^n \Big\|\partial_j\Delta_{\n}^{\!-\frac12}  h\Big\|_p^p+
\frac{(k/n)^{\frac{p}{2}}}{2^n}\sum_{\d\in \{-1,1\}^n} \Big\|\sum_{j=1}^n\d_j \partial_j \Delta_{\n}^{\!-\frac12} h\Big\|_p^p\bigg)^{\frac{1}{p}}.
\end{multline}
By Lemma~\ref{lem:inverse laplacian} we have
\begin{equation}\label{eq:laplacian bounds sum}
\bigg(\sum_{j=1}^n \Big\|\partial_j\Delta_{\n}^{\!-\frac12}  h\Big\|_{p}^p\bigg)^{\frac{1}{p}}\lesssim \sqrt{\log p} \bigg(\sum_{j=1}^n \big\|\partial_j h\big\|_{p}^p\bigg)^{\frac{1}{p}},
\end{equation}
and another application of Lust-Piquard's discrete Riesz transform inequality~\eqref{eq:LP-Riesz} shows that
\begin{equation}\label{eq:use riesz on entire cube}
\bigg(\frac{1}{2^n}\sum_{\d\in \{-1,1\}^n} \Big\|\sum_{j=1}^n\d_j \partial_j \Delta_{\n}^{\!-\frac12} h\Big\|^p_{p}\bigg)^{\frac{1}{p}}\lesssim p^{\frac32} \|h\|_p.
\end{equation}
The desired estimate~\eqref{eq:on cube p implicit} (in its slightly more refined form~\eqref{eq:on cube bext we can do in terms of p})  now follows by substituting~\eqref{eq:laplacian bounds sum} and~\eqref{eq:use riesz on entire cube} into~\eqref{eq:use linear Xp-norms}, and then substituting the resulting inequality into~\eqref{eq:get rademacher upper}.
\end{proof}

\section{Beyond $L_p$}

Fix $p\in [2,\infty)$. Following~\cite{NS14}, a Banach space $(X,\|\cdot\|_X)$ is said to be an $X_p$ Banach space  if for every $k,n\in \N$ with $k\in \n$, every $\mathsf{v}_1,\ldots,\mathsf{v}_n\in X$ satisfy
$$
\frac{1}{2^n\binom{n}{k}}\sum_{\substack{\S\subset \n\\ |\S|=k}}\sum_{\e\in \{-1,1\}^n}\Big\|\sum_{j\in \S}\e_j\mathsf{v}_j\Big\|_X^p\lesssim_X \frac{k}{n}\sum_{j=1}^n \|\mathsf{v}_j\|_X^p+\frac{(k/n)^{\frac{p}{2}}}{2^n}\sum_{\e\in \{-1,1\}^n}\Big\|\sum_{j=1}^n \e_j\mathsf{v}_j\Big\|_X^p.
$$
Inequality~\eqref{eq:linearXp} implies that $L_p$ is an $X_p$ Banach space when $p\in [2,\infty)$, and in~\cite{NS14} it was proven that for $p$ in this range  also the Schatten--von Neumann trace class $\S_p$ is an $X_p$ Banach space. Thus, due to~\cite{McC67}, there exists an $X_p$ Banach space that is not isomorphic to a subspace of $L_p$.

By~\cite{NS14} we know that any $X_p$ Banach space is also an $X_p$ metric space (see~\cite{Nao12,Bal13} for the significance of such results in the context of the Ribe program). Our proof of Theorem~\ref{thm:main} does not imply this general statement, since it relies on additional properties of the target Banach space $X$, which in our case is $L_p$. An inspection of our proof reveals that it uses only two nontrivial properties of the target space Banach $X$. Firstly, we need the following operator norm bounds.
\begin{equation}\label{eq:-1/2 laplacian}
\sup_{n\in \N} \left\|\Delta_{\n}^{\!-\frac12}\otimes I_X\right\|_{L_p(\{-1,1\}^n,X)\to L_p(\{-1,1\}^n,X)}<\infty.
\end{equation}
By~\cite[Theorem~5]{NS02}, the requirement~\eqref{eq:-1/2 laplacian} is equivalent to $X$ being a $K$-convex Banach space (for background on $K$-convexity, see the survey~\cite{Mau03}).  Secondly, we need $X$ to satisfy the following vector valued version of Lust-Piquard's inequality~\eqref{eq:LP-Riesz} for every $n\in \N$ and  $h:\{-1,1\}^n\to X$.
\begin{equation}\label{eq:riesz vector valued}
\frac{1}{2^n}\sum_{\d\in \{-1,1\}^n} \Big\|\sum_{j=1}^n\d_j \big(\partial_j\otimes I_X\big) h\Big\|^p_{L_p(\{-1,1\}^n,X)}\asymp_X \left\|\Big(\Delta^{\! \frac12}_{\n}\otimes I_X\Big)h\right\|_{L_p(\{-1,1\}^n,X)}^p.
\end{equation}

So, the argument of the present article actually shows that any $K$-convex $X_p$ Banach space $X$ that satisfies~\eqref{eq:riesz vector valued} is also an $X_p$ metric space, with the same scaling parameter as in the statement of Theorem~\ref{thm:main}. However, as we shall explain in Corollary~\ref{cor:nontrivial type} below, the validity of~\eqref{eq:riesz vector valued} already implies that $X$ is $K$-convex. This means that~\eqref{eq:-1/2 laplacian} is a consequence of~\eqref{eq:riesz vector valued} and there is no need to stipulate the validity of~\eqref{eq:-1/2 laplacian} as a separate assumption. We therefore have the following theorem.

\begin{theorem}\label{thm:generalization}
Suppose that $p\in [2,\infty)$ and that $(X,\|\cdot\|_X)$ is an $X_p$ Banach space that satisfies~\eqref{eq:riesz vector valued}.  Suppose also that $k,m,n\in \N$ satisfy $k\in \n$ and $m\ge \sqrt{n/k}$. Then every $f:\Z_{8m}^n\to X$ satisfies
\begin{multline}\label{eq:metric xp in X}
\bigg(\frac{1}{\binom{n}{k}}\sum_{\substack{\S\subset \n\\ |\S|=k}}\E\Big[\left\|f(x+4m\e_{\S})-f(x)\right\|_{X}^p\Big]\bigg)^{\frac{1}{p}}
\\\lesssim_X m\bigg(\frac{k}{n}\sum_{j=1}^n\E\Big[\|f(x+e_j)-f(x)\|_{X}^p\Big]+\left(\frac{k}{n}\right)^{\frac{p}{2}}\E\Big[\|f(x+\e)-f(x)\|_{X}^p\Big]\bigg)^{\frac{1}{p}},
\end{multline}
where the expectations are taken with respect to $(x,\e)\in \Z_{8m}^n\times \{-1,1\}^n$ chosen uniformly at random.
\end{theorem}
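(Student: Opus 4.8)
The plan is to mimic the proof of Theorem~\ref{thm:main} essentially verbatim, replacing real-valued functions by $X$-valued functions throughout, and checking at each step that only the two stipulated properties of $X$---that $X$ is an $X_p$ Banach space and that $X$ satisfies the vector-valued Lust--Piquard inequality~\eqref{eq:riesz vector valued}---are used, together with $K$-convexity, which (as promised in Corollary~\ref{cor:nontrivial type}) is a consequence of~\eqref{eq:riesz vector valued} and hence comes for free. First I would establish the $X$-valued analogue of Theorem~\ref{thm:cube h}: for $h\in L_p(\{-1,1\}^n,X)$ with $\E_{\n}h=0$ one has the same conclusion~\eqref{eq:on cube p implicit} with $\|\cdot\|_p$ replaced by $\|\cdot\|_{L_p(\{-1,1\}^n,X)}$ and $\partial_j$ by $\partial_j\otimes I_X$. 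The proof of that analogue follows the proof of Theorem~\ref{thm:cube h} line by line: Lemma~\ref{lem:use jensen} goes through because $\E_{\n\setminus\S}$ is still an averaging operator and~\eqref{eq:crucial laplacian identity} is a purely Fourier-analytic identity that tensorizes with $I_X$; the passage through discrete Riesz transforms on the subset $\S$ uses~\eqref{eq:riesz vector valued} applied to the restriction of $h$ to the coordinates in $\S$ (the $X$-valued analogue of~\eqref{eq:LP-Riesz-S}); the linear $X_p$ inequality step is replaced by the definition of an $X_p$ Banach space applied, for each fixed $\d$, to the vectors $\mathsf{v}_j=(\partial_j\otimes I_X)(\Delta_{\n}^{-1/2}\otimes I_X)h(\e)$ and then integrated in $\e$; and the bound $\|\Delta_{\n}^{-1/2}\otimes I_X\|\lesssim_X 1$ (which is~\eqref{eq:-1/2 laplacian}, i.e. $K$-convexity) replaces the use of Lemma~\ref{lem:inverse laplacian}.

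Next I would record the $X$-valued analogue of Lemma~\ref{lem:T close to identity} and Lemma~\ref{lem:T version with scaling}. The operator $\T_{\!\S}$ of~\eqref{eq:TS def} is defined the same way on $X$-valued functions, and the proof of Lemma~\ref{lem:T close to identity} is just convexity and translation invariance, so it is unchanged. For the $X$-valued version of Lemma~\ref{lem:T version with scaling} one defines, for each $x\in\Z_{8m}^n$, the odd function $h_x:\{-1,1\}^n\to X$ by $h_x(\e)=f(x+2\e)-f(x-2\e)$; then $\E_{\n\setminus\S}h_x(\e)=\T_{\!\n\setminus\S}f(x+2\e_\S)-\T_{\!\n\setminus\S}f(x-2\e_\S)$ exactly as before, the telescoping/change-of-variables argument producing the factor $m$ is formal and metric in nature, and one invokes the $X$-valued analogue of Theorem~\ref{thm:cube h} established in the previous paragraph (legitimate since $h_x$ is odd, hence mean zero). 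The crude triangle-inequality estimates~\eqref{eq:to sum partial j hx}, \eqref{eq:4 der}, \eqref{eq:done with gradient}, \eqref{eq:4 epsilon} only use the triangle inequality and convexity in $X$, so they carry over with constants depending only on $p$.

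Finally, the proof of Theorem~\ref{thm:generalization} itself is the proof of Theorem~\ref{thm:main} given in Section~\ref{sec:implication}: decompose $|f(x+4m\e_\S)-f(x)|^p$ via a three-term triangle inequality into the $\T_{\!\n\setminus\S}$-smoothed term plus two perturbation terms, average over $(x,\e,\S)$, bound the smoothed term by the $X$-valued Lemma~\ref{lem:T version with scaling} and the perturbation terms by the $X$-valued Lemma~\ref{lem:T close to identity}, and absorb the extra $1/m^p$ factor using the hypothesis $m\ge\sqrt{n/k}$. The only genuinely new content is the explanation, to be supplied in Corollary~\ref{cor:nontrivial type}, that~\eqref{eq:riesz vector valued} forces $X$ to be $K$-convex: one sees this because~\eqref{eq:riesz vector valued} applied to degree-one functions $h(\e)=\sum_j\e_j\mathsf{v}_j$ (for which $\Delta_{\n}^{1/2}h=h$ and $\sum_j\d_j(\partial_j\otimes I_X)h=\sum_j\d_j\cdot 2\mathsf{v}_j$ up to the obvious normalization) reduces to a uniform equivalence between the two Rademacher averages, which is a quantitative form of non-trivial type, hence of $K$-convexity. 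The main obstacle is therefore not any single computation---the entire argument tensorizes cleanly---but rather the bookkeeping of which constants depend on $p$ versus on $X$, and making sure that every invocation of a scalar Fourier identity (the semigroup decomposition $e^{-s\Delta_{\n}}(I-\Rad_0)=\sum_{k\ge 1}e^{-sk}\Rad_k$, the identity~\eqref{eq:crucial laplacian identity}, Bonami's hypercontractivity bound~\eqref{eq:bonami}) is replaced by the correct operator-norm statement for $\cdot\otimes I_X$, in particular using that $\Rad_k\otimes I_X$ and $e^{-s\Delta_{\n}}\otimes I_X$ remain contractions or bounded operators on $L_p(\{-1,1\}^n,X)$ with the constants we need.
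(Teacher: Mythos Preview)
Your main argument is correct and is exactly the paper's: there is no separate proof of Theorem~\ref{thm:generalization}, only the observation (made in the paragraphs preceding its statement) that the proof of Theorem~\ref{thm:main} tensorizes once one replaces~\eqref{eq:linearXp} by the $X_p$ Banach space property, replaces~\eqref{eq:LP-Riesz} and~\eqref{eq:LP-Riesz-S} by~\eqref{eq:riesz vector valued}, and replaces Lemma~\ref{lem:inverse laplacian} by the $K$-convexity bound~\eqref{eq:-1/2 laplacian}, the last being supplied by Corollary~\ref{cor:nontrivial type}.

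Your one-line justification of Corollary~\ref{cor:nontrivial type}, however, does not work. For $h(\e)=\sum_j\e_j\mathsf{v}_j$ one has $(\partial_j\otimes I_X)h(\e)=2\e_j\mathsf{v}_j$ (not $2\mathsf{v}_j$), so $\sum_j\d_j(\partial_j\otimes I_X)h(\e)=2\sum_j\d_j\e_j\mathsf{v}_j$, whose $L_p(\{-1,1\}^n,X)$ norm in $\e$ equals $2\|h\|_{L_p(\{-1,1\}^n,X)}$ for every fixed $\d$; since also $\Delta_{\n}^{1/2}h=h$, inequality~\eqref{eq:riesz vector valued} for degree-one $h$ collapses to the tautology $\|h\|_p\asymp_X\|h\|_p$ and yields no information about $X$. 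The paper's actual argument (Proposition~\ref{prop:type 2}) is more delicate: via the Maurey--Pisier theorem one finds an almost-isometric embedding $\J_n:L_{r_X}(\{-1,1\}^n)\to X$, applies~\eqref{eq:riesz vector valued} to the function $h_g(\omega)=\J_n(g_\omega)$ to transfer it back to the \emph{scalar} Riesz inequality in $L_{r_X}$, and then quotes Lamberton (and the author with Schechtman) for the fact that this scalar inequality forces $r_X\ge 1/\alpha$. Finally, your closing worry about whether $\Rad_k\otimes I_X$ obeys Bonami-type bounds is both unnecessary and misplaced (such bounds fail for general $X$): you already said, correctly, that Lemma~\ref{lem:inverse laplacian} is replaced wholesale by the black-box equivalence of~\eqref{eq:-1/2 laplacian} with $K$-convexity from~\cite[Theorem~5]{NS02}, so none of its proof needs to be tensorized.
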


It seems to be quite challenging to obtain a clean and useful characterization of the class of Banach spaces that satisfy the dimension-independent vector valued discrete Riesz transform inequality~\eqref{eq:riesz vector valued}. We did verify, in collaboration with A. Eskenazis, that the Schatten--von Neumann trace class $\S_p$ satisfies~\eqref{eq:riesz vector valued} when $p\in [2,\infty)$, but in order to see this one needs to reexamine  Lust-Piquard's proof in~\cite{Lus98}  while checking in several instances that her argument could be adjusted so as to apply to $\S_p$-valued functions as well. Since including such an argument here would be quite lengthy (and mostly a repetition of Lust-Piquard's work), we postpone the justification of~\eqref{eq:riesz vector valued} when $X=\S_p$ to forthcoming work that is devoted to vector valued Riesz transforms. Due to the fact that $\S_p$ was shown to be an $X_p$ Banach space in~\cite{NS14}, Theorem~\ref{thm:generalization} holds true when $X=\S_p$, with our current proof showing that the implicit constant in~\eqref{eq:metric xp in X} (with $X=\S_p$) is $O(p^4/\sqrt{\log p})$.

It remains to prove that  if a Banach space $X$ satisfies~\eqref{eq:riesz vector valued} then $X$ is $K$-convex.   In fact, the following stronger statement holds true (see~\cite{Mau03} for background on type of Banach spaces).

\begin{proposition}\label{prop:type 2}
Suppose that $p\in [1,\infty)$ and $\alpha\in (0,1)$, and that $(X,\|\cdot\|_X)$ is a Banach space such that for every $n\in \N$ and every $h:\{-1,1\}^n\to X$ we have
\begin{equation}\label{eq:one sided vector riesz}
\frac{1}{2^n}\sum_{\d\in \{-1,1\}^n} \Big\|\sum_{j=1}^n\d_j \big(\partial_j\otimes I_X\big) h\Big\|^p_{L_p(\{-1,1\}^n,X)}\lesssim_X \left\|\big(\Delta^{\! \alpha}_{\n}\otimes I_X\big)h\right\|_{L_p(\{-1,1\}^n,X)}^p.
\end{equation}
Then $X$ has type $\frac{1}{\alpha}-\tau$ for every $\tau\in (0,1]$. In particular, if~\eqref{eq:riesz vector valued} holds true then $X$ has type $2-\tau$ for every $\tau\in (0,1]$.
\end{proposition}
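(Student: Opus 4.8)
The plan is to exploit the hypothesis \eqref{eq:one sided vector riesz} by plugging in a cleverly chosen test function $h$ for which both sides can be computed (or estimated) explicitly, and then read off a type-$(1/\alpha-\tau)$ inequality. The natural candidate is a \emph{linear} function: given vectors $\mathsf{v}_1,\ldots,\mathsf{v}_n\in X$, take $h(\e)=\sum_{i=1}^n \e_i \mathsf{v}_i$. For such an $h$ the partial derivatives are constant, $\partial_i h\equiv 2\mathsf{v}_i$, so the left-hand side of \eqref{eq:one sided vector riesz} becomes (a constant multiple of) the Rademacher average $2^{-n}\sum_{\d}\|\sum_i \d_i \mathsf{v}_i\|_X^p$, i.e.\ exactly the quantity that appears on the \emph{large} side of a type inequality. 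Meanwhile $\Delta_{\n}^{\!\alpha}h = h$ since $h$ is homogeneous of degree $1$ (every nonzero Fourier level of $h$ is level $1$, and $1^\alpha=1$), so the right-hand side of \eqref{eq:one sided vector riesz} is just $\|h\|_{L_p(\{-1,1\}^n,X)}^p = 2^{-n}\sum_{\e}\|\sum_i \e_i\mathsf{v}_i\|_X^p$. This already gives $\mathbb E\|\sum_i \e_i\mathsf{v}_i\|^p \lesssim_X \mathbb E\|\sum_i \e_i\mathsf{v}_i\|^p$, which is vacuous — so linear test functions alone are not enough, and one must break the degree-$1$ homogeneity to make $\Delta_{\n}^{\!\alpha}$ act nontrivially and extract a gain.

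The fix is to use test functions supported on higher Fourier levels, where $\Delta_{\n}^{\!\alpha}$ contributes a factor $k^\alpha$ at level $k$. Concretely, I would choose $h(\e)=\bigl(\prod_{i\in \A}\e_i\bigr)\mathsf{v}$ for a single vector $\mathsf{v}\in X$ and a set $\A$ of size $k$, or more flexibly $h(\e)=\sum_{l} W_{\A_l}(\e)\mathsf{v}_l$ for disjoint sets $\A_l$ each of size $k$. Then $\Delta_{\n}^{\!\alpha}h = k^\alpha h$, so the right side of \eqref{eq:one sided vector riesz} is $k^{\alpha p}\|h\|_p^p$, while $\partial_i h$ is again a $\pm$-modulation of a Walsh function supported on levels $\le k$ — in fact $\partial_i W_{\A_l} = 2 W_{\A_l}\mathbf 1_{\{i\in \A_l\}}$ up to sign, so $\sum_i \d_i\partial_i h$ is, after the Rademacher average over $\d$, comparable to a sum of $k$-fold products. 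The combinatorics of identifying the left side with a known chaos/type quantity is the crux: one wants to recognize that the left side, for this choice of $h$, computes (up to constants depending only on $p$, via Bonami–Beckner hypercontractivity on level-$k$ chaos, \eqref{eq:bonami}) a quantity comparable to $k^{p/2}$ times the relevant $p$-th moment of $\sum_l \mathsf{v}_l$-type sums — and matching this against $k^{\alpha p}\|h\|^p$ forces $k^{p/2}\lesssim_X k^{\alpha p}\cdot(\text{type constant})$, i.e.\ a bound on how the type-$(1/\alpha)$ constant of $X$ can grow. Iterating/tensorizing over $k$ then yields finite type $1/\alpha-\tau$ for every $\tau>0$ by the standard fact (Maurey–Pisier) that failure of type $r$ for all $r<1/\alpha$ would produce, for every $N$, near-isometric copies of $\ell_1^N$ inside $X$, contradicting the uniform (in $n$ and in the choice of level-$k$ test functions) bound just derived.

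I expect the \textbf{main obstacle} to be the bookkeeping in the previous step: precisely relating the Rademacher-averaged left-hand side $2^{-n}\sum_{\d}\|\sum_i \d_i(\partial_i\otimes I_X)h\|_p^p$ for a level-$k$ test function to a clean type-type quantity in $X$, with constants that depend only on $p$ and on the $X$-constant in \eqref{eq:one sided vector riesz} but \emph{not} on $n$ or $k$. This requires invoking vector-valued hypercontractivity (Bonami's inequality tensored with $I_X$, valid since hypercontractive estimates at a fixed level only cost a constant factor depending on $p$) to pass between $L_2$ and $L_p$ norms of the resulting chaos, and care that the net exponent of $k$ one extracts is exactly $p/2 - \alpha p > 0$ when $\alpha<1/2$, giving a contradiction unless $X$ has the asserted type. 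The special case $\alpha=1/2$ in \eqref{eq:riesz vector valued} is exactly the borderline $1/\alpha=2$, so the argument should be arranged to degrade gracefully to "type $2-\tau$ for all $\tau$" there. Finally, $K$-convexity follows because a Banach space of nontrivial type is $K$-convex (Pisier's theorem), which is all that is needed to conclude that \eqref{eq:-1/2 laplacian} is implied by \eqref{eq:riesz vector valued}.
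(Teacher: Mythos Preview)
Your proposal has a genuine gap: the level-$k$ test functions $h(\e)=\sum_{l} W_{\A_l}(\e)\,\mathsf{v}_l$ with \emph{disjoint} $\A_l$ do not escape the linear obstruction you identified at the start. Since the $W_{\A_l}$ are then independent Rademacher variables, after the change of coordinates $\eta_l=W_{\A_l}(\e)$ your $h$ is again a linear function. Concretely, the right-hand side of~\eqref{eq:one sided vector riesz} becomes $k^{\alpha p}\,\E_\eta\big\|\sum_l \eta_l \mathsf{v}_l\big\|_X^p$, while the left-hand side equals $2^p\,\E_\d\big\|\sum_l c_l(\d)\,\mathsf{v}_l\big\|_X^p$ with $c_l(\d)=\sum_{i\in \A_l}\d_i$; letting $k\to\infty$ the latter behaves like $k^{p/2}\,\E_g\|\sum_l g_l\mathsf{v}_l\|_X^p$ for i.i.d.\ Gaussians $g_l$. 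Since $\E_\eta\|\sum_l\eta_l\mathsf{v}_l\|\lesssim \E_g\|\sum_l g_l\mathsf{v}_l\|$ holds in \emph{every} Banach space, the hypothesis yields only $k^{p/2-\alpha p}\lesssim_X 1$: vacuous for $\alpha>1/2$, and at $\alpha=1/2$ it gives merely that Gaussian and Rademacher averages are comparable in $X$ (finite cotype), not type $2-\tau$. Neither side of the resulting inequality involves $\sum_l\|\mathsf{v}_l\|^s$, so no type information is extracted. Your appeal to ``Bonami's inequality tensored with $I_X$'' is also unjustified: the bound~\eqref{eq:bonami} does not tensorize to general $X$---already the uniform boundedness of $\Rad_1\otimes I_X$ on $L_p(\{-1,1\}^n,X)$ is precisely $K$-convexity, which is part of what you are trying to prove.

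The paper's argument is structurally different and supplies the missing idea. Write $r_X$ for the supremum of the types of $X$; by Maurey--Pisier, $L_{r_X}(\{-1,1\}^n)$ embeds $2$-isomorphically into $X$ via some linear $\J_n$. For scalar $g:\{-1,1\}^n\to\R$ define $h_g(\omega)\eqdef\J_n(g_\omega)$, where $g_\omega(\e)=g(\omega\e)$. Because $\partial_j$ and $\Delta^{\!\alpha}_{\n}$ commute with the shift $g\mapsto g_\omega$, both sides of~\eqref{eq:one sided vector riesz} applied to $h=h_g$ become, through the near-isometry $\J_n$, the corresponding \emph{scalar} quantities for $g$ measured in $L_{r_X}$. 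After Kahane's inequality this yields the scalar bound $\big(\E_\d\|\sum_j\d_j\partial_j g\|_{r_X}^{r_X}\big)^{1/r_X}\lesssim_{p,X}\|\Delta^{\!\alpha}_{\n} g\|_{r_X}$ for all $n$ and all $g$, and a known scalar result (Lamberton for $\alpha=1/2$, the general case in~\cite[\S5.5]{EL08}) then forces $r_X\ge 1/\alpha$. The key step you are missing is this transference: rather than testing against a single finite-dimensional family of $\mathsf{v}_l$, one plants all of $L_{r_X}(\{-1,1\}^n)$ inside $X$ and tests against the entire family $\{h_g\}_g$, thereby reducing the vector-valued hypothesis to a scalar Riesz inequality in $L_{r_X}$.
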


By Pisier's $K$-convexity theorem~\cite{Pis82}, a Banach space $X$ has type strictly larger than $1$ if and only if $X$ is $K$-convex. We therefore have the following corollary of Proposition~\ref{prop:type 2}.
\begin{corollary}\label{cor:nontrivial type}
If $p\in [1,\infty)$ and  $(X,\|\cdot\|_X)$ is a Banach space that satisfies~\eqref{eq:one sided vector riesz} then $X$ is $K$-convex.
\end{corollary}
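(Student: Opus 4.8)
The plan is to deduce Corollary~\ref{cor:nontrivial type} immediately from Proposition~\ref{prop:type 2} together with Pisier's $K$-convexity theorem~\cite{Pis82}. Suppose that $p\in[1,\infty)$ and that $X$ satisfies~\eqref{eq:one sided vector riesz} for some $\alpha\in(0,1)$. By Proposition~\ref{prop:type 2}, $X$ has type $\tfrac{1}{\alpha}-\tau$ for every $\tau\in(0,1]$. Since $\alpha<1$ we have $\tfrac{1}{\alpha}>1$, so we may fix $\tau\in(0,1]$ small enough that $\tfrac{1}{\alpha}-\tau>1$; this exhibits $X$ as a Banach space of type strictly larger than $1$, i.e., $X$ has nontrivial type. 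By Pisier's $K$-convexity theorem~\cite{Pis82}, a Banach space is $K$-convex if and only if it has type strictly larger than $1$, and therefore $X$ is $K$-convex, which is the assertion of the corollary. In the special case $\alpha=\tfrac12$ this is exactly what is invoked in the discussion preceding Theorem~\ref{thm:generalization}: a space satisfying the two-sided inequality~\eqref{eq:riesz vector valued} in particular satisfies~\eqref{eq:one sided vector riesz} with $\alpha=\tfrac12$, hence is $K$-convex, so the hypothesis~\eqref{eq:-1/2 laplacian} need not be stipulated separately.

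There is no real obstacle here, as all of the substantive work is carried out in Proposition~\ref{prop:type 2}; the deduction is a single application of that proposition followed by the characterization of $K$-convexity via nontrivial type. The only point that must be respected is the strict inequality $\alpha<1$ in~\eqref{eq:one sided vector riesz}: it is precisely this strictness that forces $\tfrac{1}{\alpha}>1$ and hence produces a type exponent bounded away from $1$, so that Pisier's theorem applies. (If $\alpha$ were allowed to equal $1$ one would only obtain type $1-\tau$, which is vacuous, consistent with the fact that~\eqref{eq:one sided vector riesz} at $\alpha=1$ is a much weaker requirement.) Thus the role of Pisier's theorem is simply to upgrade the quantitative type estimate supplied by Proposition~\ref{prop:type 2} into the qualitative conclusion that $X$ is $K$-convex.
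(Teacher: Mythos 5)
Your proposal is correct and is exactly the paper's own (implicit) argument: the corollary is stated as an immediate consequence of Proposition~\ref{prop:type 2}, since $\alpha<1$ gives type $\frac{1}{\alpha}-\tau>1$ for small $\tau$, and Pisier's $K$-convexity theorem then yields $K$-convexity. Nothing is missing.
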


\begin{proof}[Proof of Proposition~\ref{prop:type 2}] Let $r_X\in [1,2]$ be the supremum over those $r\in [1,2]$ such that $X$ has type $r$. Our goal is to show that $r_X\ge 1/\alpha$. By the Maurey--Pisier theorem~\cite{MP76}, for every $n\in \N$ there exists a linear operator $\J_n:L_{r_X}(\{-1,1\}^n)\to X$ such that
\begin{equation}\label{eq:Jn assumption}
\forall\, g\in L_{r_X}(\{-1,1\}^n),\qquad \|g\|_{r_X}\le \|\J_ng\|_{X}\le 2\|g\|_{r_X}.
\end{equation}

Fixing $n\in \N$ and $g\in L_{r_X}(\{-1,1\}^n)$, for every  $\omega\in \{-1,1\}^n$ define $g_\omega\in L_{r_X}(\{-1,1\}^n)$  by
\begin{equation}\label{eq:def shift}
\forall\, \e\in \{-1,1\}^n,\qquad g_\omega(\e)\eqdef g(\omega\e)=g(\omega_1\e_1,\ldots,\omega_n\e_n).
\end{equation}
Next, define $h_g:\{-1,1\}^n\to X$ by setting
\begin{equation}\label{eq:hg}
\forall\, \omega\in \{-1,1\}^n,\qquad h_g(\omega)\eqdef \J_n g_\omega\in X.
\end{equation}
It follows from~\eqref{eq:def shift} and~\eqref{eq:hg} that
\begin{equation}\label{eq:shift in fourier}
\forall\, \omega\in \{-1,1\}^n,\qquad h_g(\omega)=\J_n\bigg(\sum_{\A\subset \n} \hat{g}(\A)W_{\A}(\omega)W_\A\bigg)=\sum_{\A\subset \n} \hat{g}(\A)W_{\A}(\omega)\J_n(W_\A).
\end{equation}

By~\eqref{eq:shift in fourier}, for every $\omega\in \{-1,1\}^n$ we have
\begin{equation}\label{eq:half laplace vector}
\big(\Delta^{\! \alpha}_{\n}\otimes I_X\big)h_g(\omega)=\sum_{\A\subset \n} \sqrt{|\A|}\cdot \hat{g}(\A)W_{\A}(\omega)\J_n(W_\A)=\J_n \big(\Delta^{\! \alpha}_{\n}g_\omega\big)=\J_n \Big( \big(\Delta^{\! \alpha}_{\n}g\big)_\omega\Big).
\end{equation}
Consequently,
\begin{equation}\label{eq:use Jn upper}
\forall\, \omega\in \{-1,1\}^n,\qquad \left\|\big(\Delta^{\! \alpha}_{\n}\otimes I_X\big)h_g(\omega)\right\|_X\stackrel{\eqref{eq:half laplace vector}\wedge\eqref{eq:Jn assumption}}{\le} 2\left\|\big(\Delta^{\! \alpha}_{\n}g\big)_\omega\right\|_{r_X}=2\left\|\Delta^{\! \alpha}_{\n}g\right\|_{r_X}.
\end{equation}

In a similar vein, it follows from~\eqref{eq:shift in fourier} that for every $\omega,\d\in \{-1,1\}^n$ we have
\begin{equation}\label{eq:shifted rademacher sum}
\sum_{j=1}^n\d_j \big(\partial_j\otimes I_X\big) h_g(\omega)=\J_n \bigg(\Big(\sum_{j=1}^n \d_j \partial_j g\Big)_\omega\bigg).
\end{equation}
Hence,
\begin{equation}\label{eq:use Jn lower}
\forall\, \omega,\d\in \{-1,1\}^n,\qquad \Big\|\sum_{j=1}^n\d_j \big(\partial_j\otimes I_X\big) h_g(\omega)\Big\|_X\stackrel{\eqref{eq:shifted rademacher sum}\wedge \eqref{eq:Jn assumption}}{\ge} \Big\|\sum_{j=1}^n \d_j \partial_j g\Big\|_{r_X}.
\end{equation}

By combining~\eqref{eq:use Jn upper} and~\eqref{eq:use Jn lower} with an application of~\eqref{eq:one sided vector riesz} to $h=h_g$, it follows that
\begin{equation}\label{eq:riesz for rx}
 \bigg(\frac{1}{2^n}\sum_{\d\in \{-1,1\}^n}\Big\|\sum_{j=1}^n\d_j \partial_j g\Big\|_{r_X}^{r_X}\bigg)^{\frac{1}{r_X}}\lesssim_p \bigg(\frac{1}{2^n}\sum_{\d\in \{-1,1\}^n}\Big\|\sum_{j=1}^n\d_j \partial_j g\Big\|_{r_X}^{p}\bigg)^{\frac{1}{p}}\lesssim_{p,X} \left\|\Delta^{\! \alpha}_{\n}g\right\|_{r_X},
\end{equation}
where the first step of~\eqref{eq:riesz for rx} uses Kahane's inequality~\cite{Kah64}. When $\alpha=1/2$, by a result of Lamberton~\cite[p.~283]{Lus98}, and for general $\alpha\in (0,1)$ by a result of the author and Schechtman~\cite[Section~5.5]{EL08}, the validity of~\eqref{eq:riesz for rx} for every $n\in \N$ and $g\in L_{r_X}(\{-1,1\}^n)$ implies that $r_X\ge 1/\alpha$.\qedhere

\subsection*{Acknowledgements} I am very grateful to Alexandros Eskenazis for many helpful discussions.

\end{proof}

\bibliographystyle{alphaabbrvprelim}
\bibliography{RieszXp}

 \end{document}